\documentclass[11pt]{article}

\usepackage[T1]{fontenc}
\usepackage{lmodern}

\usepackage{amsmath, amssymb, amsthm}
\usepackage{mathtools}
\usepackage{cite}
\usepackage[all]{xy}
\usepackage{hyperref}

\numberwithin{equation}{section}

\newtheorem{theorem}{Theorem}[section]
\newtheorem{proposition}[theorem]{Proposition}
\newtheorem{lemma}[theorem]{Lemma}
\newtheorem{corollary}[theorem]{Corollary}

\newtheorem{definition}[theorem]{Definition}
\newtheorem{example}[theorem]{Example}
\newtheorem{remark}[theorem]{Remark}
\newtheorem{question}[theorem]{Question}

\newcommand{\id}{\mathrm{id}}

\newcommand\Z{\mathbb{Z}}
\newcommand\Q{\mathbb{Q}}
\newcommand\R{\mathbb{R}}
\newcommand\T{\mathbf{T}}
\newcommand\pT{{\T^*}}

\newcommand\Se{\mathbf{S}}

\newcommand\pS{{\Se^*}}

\newcommand\A{\mathbf{A}}
\newcommand\TA{\mathbf{TA}}
\newcommand\Hom{\mathrm{Hom}}

\title{Weak-split extensions of topological Abelian groups}

\author{
Mar\'{\i}a V. Ferrer$^{1}$,
Salvador Hern\'andez-Mu\~noz$^{1}$, \\
Luis Javier Hern\'andez-Paricio$^{2}$\thanks{Corresponding author: \texttt{luis-javier.hernandez@unirioja.es}}
}

\date{}

\begin{document}
\maketitle

\vspace{-0.8cm}

\begin{center}
{\small
$^{1}$ Departamento de Matem\'aticas and IMAC, Universitat Jaume I, Castell\'on, Spain\\
$^{2}$ Departamento de Matem\'aticas y Computaci\'on, Universidad de La Rioja, Logro\~no, Spain
}
\end{center}

%\begin{center}
%{\small
%$^{1}$ Departamento de Matem\'aticas and IMAC, Universitat Jaume I, Castell\'on, Spain
%
%$^{2}$ Departamento de Matem\'aticas y Computaci\'on, Universidad de La Rioja, Logro\~no, Spain
%}
%\end{center}

%\vspace{0.1cm}

\noindent\textbf{Published version:}
Electronic Research Archive, 34 (2026), 1720--1741.\\
\url{https://doi.org/10.3934/era.2026078}

\vspace{0.5cm}

\begin{abstract}
In the category of topological Abelian groups, we consider the usual notion of extension $E=(B\to X \to A)$ of $B$ by $A$ and the notion of weak-split extension (when $X\to A$  has a continuous split $A \to X$). Given a weak-split extension $E$, the topological Abelian group $X$ is homeomorphic to $B \times A$, but in general, $X$ need not be algebraic isomorphic to $B \times A$. In this paper, for two topological Abelian groups $A, B$, we study the Abelian group $E_{\TA}^{ws}(A,B) $ of weak-split extensions of $B$ by $A$ modulo extension isomorphisms. We prove that  $E_{\TA}^{ws}(A,B) $ can be described as  the Abelian group  of  all possible continuous sums given in  the product  topological space  $B \times A$ (modulo topological isomorphism) having $B$ as a topological subgroup and $A$ as a topological quotient.  We also give an alternative description of $E_{\TA}^{ws}(A,B) $ as a quotient $\mathcal{Z}_c(A,B)/\mathcal{B}_c(A,B)$, where $\mathcal{Z}_c(A,B)$ are cocycles  represented  by certain continuous maps of the form $A\times A \to B$,  and similarly for the coboundaries $\mathcal{B}_c(A,B)$. For two topological Abelian groups $A,B$, we compare the Abelian group of $ws$-extensions $E_{\TA}^{ws}(A,B)$ with the Abelian group of standard extensions $E_{\A}(A,B)$ where now  $A, B$  also denote the subjacent Abelian groups. We relate these different types of extensions using an exact sequence with six terms.
Although the Bohr topology of discrete Abelian groups has been investigated by many workers, there still remain
many parts that are not well  understood. Here, as an application of the methods developed in the paper, new examples of nontrivial $ws$-extensions for discrete Abelian groups equipped with the Bohr topology are provided and some related open questions are also proposed.
\end{abstract}

\medskip
\noindent\textbf{Keywords:} topological Abelian groups; group extensions; weak-split extensions; continuous  cocycles; Bohr topology

\medskip
\noindent\textbf{MSC (2020):} 22A05, 18G50

\section{Introduction}

For given topological Abelian groups $A$ and $B$, an extension of $B$ by $A$ in the category of topological Abelian groups $\TA$ is a sequence of topological Abelian groups
\[
\xymatrix{ B \ar[r]^j & X \ar[r]^p & A,}
\]
where $p\colon X\to A$ is a surjective open homomorphism and $j \colon B\to X$ is the kernel of $p$ in $\TA$. 

For given Abelian groups $C$ and $D$, an extension of $D$ by $C$ is a sequence in the category of Abelian groups $\A$:
\[
\xymatrix{ D \ar[r]^i & Y \ar[r]^q & C,}
\]
where $q$ is a surjective homomorphism and $i\colon D\to Y$ is the kernel of $q$ in $\A$.

In this paper, we use the following notation: $\T$ is the category of topological spaces, $\pT$ is the category of pointed topological spaces, $\Se$ is the category of sets, and $\pS$ is the category of pointed sets. 
Given a category $\mathbf{C}$ and two objects $A, B$ in $\mathbf{C}$, we denote by $\Hom_{\mathbf{C}}(A,B)$ the set of morphisms from $A$ to $B$, and by $\mathrm{Aut}_{\mathbf{C}}(A)$ the set of automorphisms of the object $A$.

We consider extensions (short exact sequences) of the following types:
\begin{itemize}
  \item[(i)] Algebraic extensions, $\xymatrix{ D \ar[r]^{i} & Y \ar[r]^{q} & C,}$ in $\A$.
  \item[(ii)] Extensions of topological Abelian groups, $\xymatrix{ B \ar[r]^{j} & X \ar[r]^{p} & A,}$ in $\TA$.
  \item[(iii)] Weakly-split extensions of topological Abelian groups, $$\xymatrix{ B \ar[r]^{j} & X \ar[r]^{p} & A,}$$ which are extensions in $\TA$ such that $p$ admits a continuous section; that is, there exists a pointed continuous map $\mathbf{s} \colon A \to X$ (in $\pT$) such that $p\mathbf{s} = \id_A$ (in certain instances, we will use bold letters to emphasize that some continuous maps need not be homomorphisms). 
\end{itemize}

The extensions of type (i) have been extensively studied in the literature: the extensions of $D$ by $C$, modulo extension equivalence, have the structure of an Abelian group with the Baer sum \cite{Baer29,Baer34}. Standard references on extensions of Abelian groups and homological algebra include \cite{Eilenberg42,Hilton-Stammbach,Fuchs2015}.

Extensions of type (ii) have also been studied in a topological context; see for instance \cite{Hugo,BELLO20161607,BELLO2017379}.

A systematic analysis of weak-split extensions of type (iii) (called $ws$-extensions in this paper) was initiated in the doctoral thesis of Bello~\cite{Hugo}, where ws-extensions were introduced and studied from the perspective of local splitting and pseudo-homomorphisms. Subsequent developments by Bello et al.~\cite{BELLO20161607} provide criteria for the existence of continuous or local cross sections and characterize when an extension splits.

For $ws$-extensions of type (iii) in which $B$ is a closed subgroup of $X$, it is said in the literature that $B$ is a css-subgroup of $X$ (see also Definition~\ref{ccs}). From this perspective, several results on css-subgroups can be interpreted as structural results on $ws$-extensions. Nevertheless, there exist $ws$-extensions where $B$ is not closed in $X$; see Example~\ref{ej1}. For further results on css-subgroups and Bohr topologies, see \cite{COMFORT2001215}.

Given two $ws$-extensions 
\[
E=\left(\xymatrix{ B \ar[r]^{j} & X \ar[r]^{p} & A}\right)
\quad \text{and} \quad
E'=\left(\xymatrix{ B \ar[r]^{j'} & X' \ar[r]^{p'} & A}\right),
\]
we say that $E \sim E'$ if there exists a continuous homomorphism $\theta \colon X \to X'$ such that $\theta j = j'$ and $p' \theta = p$. This generates an equivalence relation on $ws$-extensions of $B$ by $A$, and the quotient is denoted by $E_{\TA}^{ws}(A,B)$. For analogous constructions in cases (i) and (ii), see the references cited above.

The main goal of this paper is to study $E_{\TA}^{ws}(A,B)$, endowed with an Abelian group structure via the Baer sum. Our main results are presented in Theorem~\ref{sumproduct}, Theorem \ref{cocycles}, and Theorem  \ref{sixterms}. In Theorems~\ref{sumproduct} and \ref{cocycles}, we extend classical algebraic results (cf.~\cite[Chapter 9]{Fuchs2015}) to the topological setting, while Theorem~\ref{sixterms} compares the continuous and discrete situations.

A fundamental property of a weak-split extension 
\[
\xymatrix{ B \ar[r]^{j} & X \ar[r]^{p} & A}
\]
in $\TA$ is that $X$ is always homeomorphic to the product space $B \times A$.

We first consider the quotient set $\Sigma_{A,B}/\sim$ of all topological Abelian group structures $+'_{B \times A}$ on $B \times A$, up to topological isomorphism, such that $(B,+)$ is a topological subgroup and $(A,+)$ is a quotient group. Each such structure determines a weak-split extension
\[
E_{+'_{B \times A}} = \left(\xymatrix{ (B, +) \ar[r]^-{i_1} & (B \times A, +'_{B \times A}) \ar[r]^-{p_2} & (A,+)}\right),
\]
where $i_1(b)=(b,0)$ and $p_2(b,a)=a$.

We prove that the canonical map
\[
\Sigma_{A,B}/\sim \to E_{\TA}^{ws}(A,B), \quad [+'_{B \times A}] \mapsto [E_{+'_{B \times A}}],
\]
is an isomorphism of Abelian groups (Theorem~\ref{sumproduct}). We also show (Theorem~\ref{cocycles}) that $E_{\TA}^{ws}(A,B)$ is isomorphic to the quotient $\mathcal{Z}_c(A,B)/\mathcal{B}_c(A,B)$ of suitable cocycles and coboundaries.

Finally, for given topological Abelian groups $A,B$, we obtain in Theorem~\ref{sixterms} a six-term exact sequence relating $E_{\TA}^{ws}(A,B)$ and $E_{\A}(A,B)$.

In the last section, we present examples and computations, showing in particular that $E_{\TA}^{ws}(A,B)$ can be nontrivial.

In the algebraic setting, for the case of (non-Abelian) groups, there are analogous results that represent extensions of the form $N \to Z \to G$, where $N$ is a normal Abelian subgroup, as elements of the cohomological group $H^2(G,N)$; see \cite{Eilenberg42,Brown82}. In the non-Abelian case, the study of extensions of topological groups with continuous cocycles requires more sophisticated techniques involving actions of $G$ on $N$ and non-Abelian cohomologies. However, the advantage of our setting is that it admits a useful simplification that connects directly with Abelian homological algebra.

\section{Weak-split extensions in $\TA$}

In this work, we consider the canonical forgetful functors
$U \colon \TA \to \pT$ and $W\colon \TA \to \A$.
Given a topological Abelian group $A$, we sometimes consider only its algebraic structure $WA$, and at other times its underlying pointed topological space $UA$. To avoid a lengthy notation, when the context is clear, both $UA$ and $WA$ will be denoted simply by $A$.

\medskip

\begin{remark}
Denote by $\mathbf{HTA}$ the full subcategory of Hausdorff topological Abelian groups. The kernel of a morphism $f\colon A \to B$ in $\mathbf{HTA}$ is the same as that in the category $\TA$. However, the cokernel of $f$ in $\mathbf{HTA}$ is given by $B/\mathrm{cl}(f(A))$, where $\mathrm{cl}(f(A))$ is the topological closure of $f(A)$. Note that the inclusion functor $\mathbf{HTA} \to \TA$ does not preserve cokernels.
\end{remark}

Recall that a map $f \colon X \to Y$ between topological spaces is open if $f(O)$ is open in $Y$ for every open set $O \subseteq X$. The map $f$ is said to be relatively open if its corestriction $X \to f(X)$, $x \mapsto f(x)$, is open.

\begin{definition}\label{ws-ext}
A sequence
\[
E=(\xymatrix{ B \ar[r]^j & X \ar[r]^p & A })
\]
in $\TA$ is said to be an extension of $B$ by $A$ if $p$ is a quotient map and $(B, j)$ is the kernel of $p$. Equivalently, if $j$ and $p$ are relatively open maps.
\end{definition}

\medskip

\begin{lemma}\label{sectionretraction}
The following statements are equivalent for an extension
\[
E=(\xymatrix{ B \ar[r]^j & X \ar[r]^p & A })
\]
in $\TA$.
\begin{itemize}
\item[(i)] There is a continuous pointed retraction $\mathbf{r}\colon X \to B$ ($\mathbf{r} j=\id_B$ in $\pT$, $\mathbf{r}(0)=0$), such that $\mathbf{r}(x+j(b))=\mathbf{r}(x)+b$ for every $x\in X$ and $b\in B$.
\item[(ii)] There is a continuous pointed section $\mathbf{s} \colon A \to X$ in $\pT$ ($p\mathbf{s}=\id_A$, $\mathbf{s}(0)=0$).
\item[(iii)] There exists a pointed homeomorphism $\boldsymbol{\phi} \colon B \times A \to X$ such that the following diagram is commutative in $\pT$:
\[
\xymatrix{
B \ar[r]^{i_1} \ar[d]_{\id_B} & B \times A \ar[r]^{p_2} \ar[d]_{\boldsymbol{\phi}} & A \ar[d]^{\id_A} \\
B \ar[r]^j & X \ar[r]^p & A
}
\]
where $i_1(b)=(b,0)$ and $p_2(b,a)=a$.
\end{itemize}
\end{lemma}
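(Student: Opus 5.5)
I would prove the cycle (ii)$\Rightarrow$(iii)$\Rightarrow$(i)$\Rightarrow$(ii), using throughout that $j$ is a topological embedding onto $\ker p$ (by Definition~\ref{ws-ext}, $j$ is injective and relatively open, so $j\colon B\to j(B)$ is a homeomorphism) and that $p$ is a continuous homomorphism.

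For (ii)$\Rightarrow$(iii), given a continuous pointed section $\mathbf{s}$, I define $\boldsymbol{\phi}(b,a)=j(b)+\mathbf{s}(a)$. It is continuous because addition in $X$ is continuous, it is pointed since $\mathbf{s}(0)=0$, and the diagram commutes: $\boldsymbol{\phi}(b,0)=j(b)$ and $p\boldsymbol{\phi}(b,a)=p\mathbf{s}(a)=a$. For the inverse, note that $x-\mathbf{s}(p(x))\in\ker p=j(B)$. So I set $\boldsymbol{\psi}(x)=\bigl(j^{-1}(x-\mathbf{s}p(x)),\,p(x)\bigr)$. The second coordinate is clearly continuous. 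The first is continuous because $x\mapsto x-\mathbf{s}p(x)$ is a continuous map $X\to j(B)$ (as a corestriction to the subspace $j(B)$), and $j^{-1}\colon j(B)\to B$ is continuous since $j$ is an embedding. Checking $\boldsymbol{\psi}\boldsymbol{\phi}=\id$ and $\boldsymbol{\phi}\boldsymbol{\psi}=\id$ is a direct computation using $p j=0$ and $p\mathbf{s}=\id_A$.

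For (iii)$\Rightarrow$(i), I first replace $\boldsymbol{\phi}$ by the map $\boldsymbol{\phi}'(b,a)=j(b)+\boldsymbol{\phi}(0,a)$. Here $\mathbf{s}(a):=\boldsymbol{\phi}(0,a)$ is a continuous pointed section, since $p\boldsymbol{\phi}=p_2$ and $\boldsymbol{\phi}(0,0)=0$. By the argument in the previous step, $\boldsymbol{\phi}'$ is a homeomorphism with the inverse $\boldsymbol{\psi}$ given there. I then define $\mathbf{r}=p_1\boldsymbol{\psi}$, that is, $\mathbf{r}(x)=j^{-1}(x-\mathbf{s}p(x))$. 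It is continuous and pointed, and $\mathbf{r}j(b)=j^{-1}(j(b)-\mathbf{s}(0))=b$. Finally, $\mathbf{r}(x+j(b))=j^{-1}(x+j(b)-\mathbf{s}p(x))=\mathbf{r}(x)+b$, because $p(x+j(b))=p(x)$ and $j^{-1}$ is additive on $j(B)$.

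For (i)$\Rightarrow$(ii), which I expect to be the main obstacle, the natural candidate is $\mathbf{s}(a)=x-j\mathbf{r}(x)$ for any $x$ with $p(x)=a$. This is well defined: if $x'=x+j(b)$, then $x'-j\mathbf{r}(x')=x+j(b)-j\mathbf{r}(x)-j(b)=x-j\mathbf{r}(x)$ by the equivariance of $\mathbf{r}$. It satisfies $p\mathbf{s}(a)=a$, and $\mathbf{s}(0)=0-j\mathbf{r}(0)=0$. For continuity, observe that $\mathbf{s}\circ p=\id_X-j\mathbf{r}$ is continuous. Since $p$ is a surjective open map, hence a quotient map, it follows that $\mathbf{s}$ is continuous. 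This quotient-map argument is the key point that needs care, and it is exactly where the hypothesis that $p$ is relatively open (and surjective) enters.
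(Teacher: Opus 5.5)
Your proof is correct and follows the route the paper indicates. The paper defers the proof to \cite[Lemma 5.1.5]{Hugo}, but the formulas it records in Remark~\ref{homeomorphism} are exactly the ones you use: $\boldsymbol{\phi}(b,a)=j(b)+\mathbf{s}(a)$ with inverse $x\mapsto(\mathbf{r}(x),p(x))$, $\mathbf{r}=j^{-1}(\id_X-\mathbf{s}p)$, and $\mathbf{s}(p(x))=x-j\mathbf{r}(x)$. You also supply the two details that genuinely need care. The first is normalizing $\boldsymbol{\phi}$ in (iii)$\Rightarrow$(i), since $p_1\boldsymbol{\phi}^{-1}$ for an arbitrary $\boldsymbol{\phi}$ need not satisfy $\mathbf{r}(x+j(b))=\mathbf{r}(x)+b$. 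The second is the quotient-map argument for the continuity of $\mathbf{s}$ in (i)$\Rightarrow$(ii).
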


\begin{proof}
For a complete proof we refer the reader to \cite[Lemma 5.1.5]{Hugo}.
\end{proof}

\begin{remark}\label{homeomorphism}
Suppose that
\[
E=(\xymatrix{ B \ar[r]^j & X \ar[r]^p & A })
\]
is an extension in $\TA$.
\begin{itemize}
\item[(i)] If $\mathbf{s} \colon A \to X$ is a continuous pointed section of $p$, the associated continuous pointed retraction $\mathbf{r}\colon X \to B$ is given by $\mathbf{r}=j^{-1}(\id_X-\mathbf{s}p)$ and satisfies $\mathbf{r}(x+j(b))=\mathbf{r}(x)+b$ for every $x\in X$ and every $b \in B$.
\item[(ii)] If $\mathbf{r}\colon X \to B$ is a continuous pointed retraction such that $\mathbf{r}(x+j(b))=\mathbf{r}(x)+b$ for every $x\in X$ and every $b \in B$, then the associated section $\mathbf{s}$ is given by $\mathbf{s}(p(x))= x-j\mathbf{r}(x)$.
\item[(iii)] The homeomorphism $\boldsymbol{\phi} \colon B \times A \to X$ induced by $\mathbf{s}$ is given by $\boldsymbol{\phi}(b,a)=j(b)+\mathbf{s}(a)$ $(b,a) \in B \times A$, and its inverse is $\boldsymbol{\varphi}(x)=(\mathbf{r}(x),p(x))$,  $x \in X$.
\end{itemize}
\end{remark}

\begin{definition}\label{wses}
An extension
\[
E=(\xymatrix{ B \ar[r]^j & X \ar[r]^p & A })
\]
in $\TA$ is said to be a \emph{weak-split extension} (or a \emph{$ws$-extension}) if there exists a continuous pointed section $\mathbf{s} \colon A \to X$ such that $p\mathbf{s}=\id_A$ and $\mathbf{s}(0)=0$. We also say that $\mathbf{s}$ is a \emph{continuous split} and   that sequence $E$ \emph{$ws$-splits}.
\end{definition}

\begin{remark}\label{nows}  Let $B, A$ be topological Abelian groups. In general, an extension $B \to X \to A$ need not have a continuous pointed section. For instance, consider the extension $E=(\mathbb{Z} \to \mathbb{R} \to \mathbb{T})$, where $\mathbb{Z}$ is the discrete topological Abelian group  of integer numbers, $\mathbb{R}$ is the  topological Abelian group  of real numbers with the standard topology and $\mathbb{T}$ is the 1-sphere (also called 1-torus)  with the standard topology.
 Since $\mathbb{R}$ is not homeomorphic to $\mathbb{Z} \times \mathbb{T}$, it follows that $E$     does not admit a pointed continuous section.
 
 However, in Section 5, we present examples of nontrivial extensions that admit a  continuous pointed section; for instance, taking Bohr topologies,  the extension $({\mathbb{Z}}^\sharp \to {\mathbb{R}}^\sharp \to {\mathbb{T}}^\sharp )$ admits a continuous section. See Proposition \ref{R1} in  Section 5.
 
 In some cases,  under appropriate  topological conditions on
 $B, A$, every extension is a weak-split extension.
For example, if $B$ is $T_2$-compact  and $A$ is a zero-dimensional $k_\omega$-space, then every extension of the form $B \to X \to A$ admits a continuous pointed section; see  \cite[Corollary  5.1.16]{Hugo} or
\cite[Theorem 2.8]{BELLO20161607}.
\end{remark}

 We can consider the category $\mathcal{E}_{\TA} (A,B)$,
whose objects are extensions (see De\-fi\-ni\-tion \ref{ws-ext}) $$E=(\xymatrix{  B \ar[r]^\alpha& X \ar[r]^\beta & A }),$$ and a morphism $\theta \colon E \to E'$ is given by a  continuous homomorphism $\theta \colon X \to X'$ such that the following diagram is commutative:
$$\xymatrix{  \ar[d]_{\id_B} B \ar[r]^\alpha &  \ar[d]^{\theta}  X \ar[r]^\beta & A   \ar[d]^{\id_A}\\
    B \ar[r]^{\alpha'}& X' \ar[r]^{\beta' } & A. }$$

In this work, we are interested in the study of the full subcategory  $\mathcal{E}_{\TA}^{ws} (A,B)$ of  $\mathcal{E}_{\TA} (A,B)$
determined by all the      weak-split extensions; see Definition \ref{wses}.
As a consequence of the following Lemma \ref{isom},  every morphism in $\mathcal{E}_{\TA}^{ws} (A,B)$ is a isomorphism. This implies that
$\mathcal{E}_{\TA}^{ws} (A,B)$ is a large groupoid; that is, a category such  that every morphism is an isomorphism.

In a similar way for the category of Abelian groups, if $A, B \in \A$,  we can  consider the large groupoid $\mathcal{E}_{\A} (A,B)$ of extensions of $B$ by $A$. For two topological Abelian groups $A, B \in \TA$, we note the existence of a natural forgetful functor $\mathcal{E}_{\TA} (A,B) \to \mathcal{E}_{\A} (A,B)$.
\medskip

  \begin{lemma}\label{isom}  Consider the following commutative diagram in $\TA$:
   $$\xymatrix{  \ar[d]_{\id_B} B \ar[r]^\alpha &  \ar[d]^{\theta}  X \ar[r]^\beta & A   \ar[d]^{\id_A}\\
    B \ar[r]^{\alpha'}& X' \ar[r]^{\beta' } & A }$$

   Suppose that $E=(\xymatrix{B \ar[r]^{\alpha}& X \ar[r]^\beta & A})$ and  $E'=(\xymatrix{B \ar[r]^{\alpha'}& X' \ar[r]^{\beta'} & A})$ are extensions. Then, we  have the following:
   \begin{itemize}
  \item[(i)] If $E$ is a weak-split extension and  ${\bf{s}} \colon A \to X$ is a  continuous pointed section, then
  $E'$ is also a weak-split extension, and ${\bf{s'}}=\theta {\bf{s}}  \colon A \to X'$ is a weak-split of $E'$.
    \item[(ii)]   If $E, E'$ are weak-split extensions and  $\theta$ is a continuous homomorphism, then $\theta$ is a topological  isomorphism (a homeomorphism).
\end{itemize}
   \end{lemma}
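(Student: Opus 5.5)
Part (i) is a direct verification. Put $\mathbf{s}'=\theta\mathbf{s}$. It is continuous, being a composite of continuous maps. It is pointed, since $\theta$ is a homomorphism and so $\mathbf{s}'(0)=\theta(0)=0$. It is a section, since $\beta'\theta=\beta$ gives $\beta'\mathbf{s}'=\beta'\theta\mathbf{s}=\beta\mathbf{s}=\id_A$. By Definition~\ref{wses}, $E'$ is therefore a weak-split extension with continuous split $\mathbf{s}'$.

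For (ii), the plan is to transport $\theta$ to product coordinates using Lemma~\ref{sectionretraction} and Remark~\ref{homeomorphism}. Choose a continuous pointed section $\mathbf{s}$ of $\beta$. By (i) we may take $\mathbf{s}'=\theta\mathbf{s}$ as the section of $\beta'$. Remark~\ref{homeomorphism}(iii) then gives homeomorphisms
\[
\boldsymbol{\phi}(b,a)=\alpha(b)+\mathbf{s}(a), \qquad \boldsymbol{\phi}'(b,a)=\alpha'(b)+\theta\mathbf{s}(a),
\]
from $B\times A$ onto $X$ and onto $X'$ respectively. Since $\theta$ is a homomorphism with $\theta\alpha=\alpha'$,
\[
\theta\boldsymbol{\phi}(b,a)=\theta\alpha(b)+\theta\mathbf{s}(a)=\alpha'(b)+\mathbf{s}'(a)=\boldsymbol{\phi}'(b,a).
\]
Thus $\theta\boldsymbol{\phi}=\boldsymbol{\phi}'$, so $\theta=\boldsymbol{\phi}'\boldsymbol{\phi}^{-1}$ is a composite of homeomorphisms. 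Hence $\theta$ is a bijective continuous homomorphism with continuous inverse, i.e.\ a topological isomorphism. Its inverse is $\boldsymbol{\phi}\boldsymbol{\varphi}'$, where $\boldsymbol{\varphi}'(x')=(\mathbf{r}'(x'),\beta'(x'))$ as in Remark~\ref{homeomorphism}. The inverse of a bijective homomorphism is automatically a homomorphism.

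The only point needing care is that $\boldsymbol{\phi}$ and $\boldsymbol{\phi}'$ really are homeomorphisms. This is exactly the content of Lemma~\ref{sectionretraction}(iii) and Remark~\ref{homeomorphism}(iii), and it rests on the continuity of the retraction $\mathbf{r}=\alpha^{-1}(\id_X-\mathbf{s}\beta)$. That continuity in turn uses that $\alpha$ is relatively open, i.e.\ a topological embedding onto the kernel, which is part of the definition of extension. Since these facts are established earlier, I expect no genuine obstacle. Note also that the hypothesis in (ii) that $E'$ is weak-split is automatic from (i); the argument above uses the particular section $\theta\mathbf{s}$ of $E'$ rather than an arbitrary one.
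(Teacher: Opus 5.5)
Your proposal is correct and follows the paper's proof essentially verbatim. Part (i) is the identity $\beta'\theta\mathbf{s}=\beta\mathbf{s}=\id_A$, and part (ii) shows $\theta\boldsymbol{\phi}=\boldsymbol{\phi}'$ for the product homeomorphisms built from $\mathbf{s}$ and $\theta\mathbf{s}$, so that $\theta=\boldsymbol{\phi}'\boldsymbol{\phi}^{-1}$; your additional remarks (pointedness of $\theta\mathbf{s}$, why $\boldsymbol{\phi}'$ is a homeomorphism, and that weak-splitness of $E'$ already follows from (i)) are accurate details the paper leaves implicit.
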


   \begin{proof} (i): Note that $\beta'{\bf{s'}}=\beta'\theta {\bf{s}}=\beta{\bf{s}}=\id_A$.

(ii) Consider the commutative diagram:      $$\xymatrix{ \ar[d]_{\id_B}  B \ar[r]^i & \ar[d]_{\boldsymbol{\phi}}   B \times A \ar[r]^{p_2} & A \ar[d]^{\id_A} \\
    \ar[d]_{\id_B} B \ar[r]^{\alpha} & \ar[d]_{\theta}  X \ar[r]^{\beta} & A \ar[d]^{\id_A} \\
 B \ar[r]^{\alpha'} & X' \ar[r]^{\beta'} & A },$$

where from Remark  \ref{homeomorphism},
$\boldsymbol{\phi} \colon B \times A \to X$, $\boldsymbol{\phi}((b,a))=\alpha (b) + {\bf{s}}(a))$, and
$\boldsymbol{\phi}' \colon B \times A \to X'$, $\boldsymbol{\phi}'((b,a))=\alpha' (b) + {\bf{s}'}(a))$ are homeomorphisms. Since $\theta$ is a continuous homomorphism and the commutativity of the diagram,  we also have the following:

 $\theta({\boldsymbol{\phi}}(b,a))=\theta(\alpha (b)+{\bf{s}}(a))=\theta \alpha (b)+\theta{\bf{s}}(a)=\alpha'(b)+{\bf{s}}'(a)={\boldsymbol{\phi}'}(b,a).  $

 This implies that $\theta={\boldsymbol{\phi}'}{\boldsymbol{\phi}}^{-1}$ is a topological  isomorphism (a homeomorphism).
  \end{proof}

Given two extensions \(E, E' \in \mathcal{E}_{\TA}(A,B)\), we say that
\(E \sim E'\) if there exists a morphism
\(\theta \colon E \to E'\) in \(\mathcal{E}_{\TA}(A,B)\).
This relation generates an equivalence relation defined as follows:
We say that \(E \approx E'\) if there exists a finite sequence
\[
E_0, E_1, \ldots, E_n \in \mathcal{E}_{\TA}(A,B),
\]
such that \(E = E_0\), \(E' = E_n\), and for each
\(i \in \{1,\ldots,n\}\) either \(E_{i-1} \sim E_i\) or
\(E_i \sim E_{i-1}\).

We have analogous relations in the full subcategory
\(\mathcal{E}_{\TA}^{ws}(A,B)\).
However, in the case of \(ws\)-extensions, Lemma~\ref{isom} implies that
\(E \sim E'\) if and only if \(E \approx E'\).
We also remark that the categories
\(\mathcal{E}_{\TA}(A,B)\) and
\(\mathcal{E}_{\TA}^{ws}(A,B)\) have finite skeletons.
Therefore, we may consider the following quotient~sets.

\begin{definition}
Given \(A, B \in \TA\), the quotient set
\[
E_{\TA}(A,B)
  = \mathcal{E}_{\TA}(A,B)/\!\approx
\]
is called the set of equivalence classes of extensions of \(B\) by \(A\)
in the category of topological Abelian groups modulo continuous homomorphisms.
 Analogously,
\[
E_{\TA}^{ws}(A,B)
  = \mathcal{E}_{\TA}^{ws}(A,B)/\!\approx
\]
is the set of equivalence classes of \(ws\)-extensions of \(B\) by \(A\)
modulo continuous homomorphisms.
\end{definition}

Similarly, given two extensions  $E, E'$ in  $\mathcal{E}_{\A} (A,B)$, we denote by $E_{\A}(A, B)$ the set of all equivalence classes of extensions of Abelian groups of the form
$B \to X \to A$ modulo extension homomorphisms.

We shall use that the category $\TA$ of topological Abelian groups has {both coproducts and pushouts}. We refer the reader to  \cite{HIGGINS1977152,NICKOLAS2002403} for a description of the co\-pro\-duct topology.  Given an extension $E=(\xymatrix{  B \ar[r]^\alpha& X \ar[r]^\beta & A )}$ and continuous homomorphism $u \colon B \to B'$, there is a commutative diagram
$$\xymatrix{  \ar[d]_{u} B \ar[r]^\alpha &  \ar[d]^{\theta}  X \ar[r]^\beta & A   \ar[d]^{\id_A}\\
    B' \ar[r]^{\alpha'}& X' \ar[r]^{\beta' } & A. }$$
  such that the left square is a pushout. We denote by $u_*(E)$ the extension $E'=(\xymatrix{  B' \ar[r]^{\alpha'}& X' \ar[r]^{\beta'} & A })$.

Dually, the category $\TA$ of topological Abelian groups also has  products and pullbacks, and
given an extension $E=(\xymatrix{  B \ar[r]^\alpha& X \ar[r]^\beta & A })$ and a continuous homomorphism $v \colon A' \to A$, there is a commutative diagram
$$\xymatrix{  \ar[d]_{\id_B} B \ar[r]^{\alpha'} &  \ar[d]^{\theta}  X' \ar[r]^{\beta'} & A'   \ar[d]^{v}\\
    B \ar[r]^{\alpha}& X \ar[r]^{\beta } & A. }$$
  such that the right square is a pullback. We denote by $v^*(E)$ the extension $E'=\xymatrix{  B \ar[r]^{\alpha'}& X' \ar[r]^{\beta'} & A' }$.

  The Baer sum of two extensions is given as follows: Given two extensions  $E=(\xymatrix{  B \ar[r]^\alpha& X \ar[r]^\beta & A })$ and  $E'=(\xymatrix{  B \ar[r]^{\alpha'}& X' \ar[r]^{\beta'} & A })$, denote by $E \times E'$ the product extension $\xymatrix{  B \times B \ar[r]^{\alpha \times \alpha'}& X \times X' \ar[r]^{\beta \times \beta'} & A \times A}.$
Now, take the co-diagonal map $\nabla^B \colon B \times B \to B, \nabla^B (b, b')=b+b'$, and consider the extension
$ (\nabla^B_*)(E \times E')=(B \to X'' \to A \times A)$.  Afterwards, take   the diagonal map $\Delta_A\colon A \to A \times A, \Delta_A(a)=(a,a)$, and consider the extension $\Delta_A^*( (\nabla^B_*)(E \times E'))=(B \to X''' \to A)$. Define the Baer sum by  $$[E]+[E']=[\Delta_A^*( (\nabla^B_*)(E \times E'))].$$

The category $\mathcal{E}_{\TA} (A, B)$ has a skeleton which is a set, and $E_{\TA}(A, B)$ has the structure of an Abelian group with the Baer sum. We refer the reader to \cite{MacLane95,Fuchs2015}, and for more details on topological Abelian groups, to \cite{Hugo}.  Moreover, the Baer sum of two $ws$-extensions is a  $ws$-extension and $E_{\TA}^{ws}(A, B)$ is a subgroup in $E_{\TA}(A, B)$.

\section{New descriptions of $E_{\TA}^{ws}(A,B)$}

In this section, we show that the category $\mathcal{E}_{\TA}^{ws}(A,B)$ has a skeleton which is a set. We construct two Abelian groups $\Sigma_{A,B}/\sim$ and $\mathcal{Z}_c(A,B)/\mathcal{B}_c(A,B)$, which are isomorphic to $E_{\TA}^{ws}(A,B)$.

Let $A,B \in \TA$. Consider the canonical split extension
\[
\xymatrix{B \ar[r]^{i_1} & B \times A \ar[r]^{p_2} & A,}
\]
where $i_1(b)=(b,0)$ and $p_2(b,a)=a$, and in this case the canonical section $s\colon A \to B \times A$, $s(a)=(0,a)$ is a continuous homomorphism.

Denote by $+$ the continuous sum (for the groups $B, B \times A, A$) and consider the set $\Sigma_{A,B}$ of all possible continuous sums on $B \times A$:
\[
+'_{B \times A} \colon (B\times A) \times (B\times A) \to B\times A,
\]
such that $((B\times A),+'_{B \times A})$ is a topological Abelian group, and $i_1 \colon (B, +) \to (B\times A, +')$, $p_2\colon (B\times A, +') \to (A, +)$ are continuous homomorphisms in $\TA$.

Given $+', +'' \in \Sigma_{A,B}$, we say that $+' \sim +''$ if there is a topological isomorphism $\gamma \colon (B \times A, +') \to (B \times A, +'')$ such that for $b \in B$, $\gamma i_1(b)=i_1(b)$ and for $a\in A$, $p_2\gamma(b,a)=a$. Note that $\gamma \in \mathrm{Aut}_{\pT}(B \times A) \cap \mathrm{Aut}_{\A}(B \times A)$.

We consider the quotient set
\[
\frac{\Sigma_{A,B}}{\sim}
\]
of all such structures up to topological isomorphism. For each $+'_{B \times A}$, we have a canonical extension
\[
E_{+'_{B \times A}}=
\left(\xymatrix{ (B, +) \ar[r]^-{i_1}& (B \times A,+'_{B \times A}) \ar[r]^-{p_2} & (A,+)}\right).
\]
Therefore, there is a canonical inclusion functor $\Sigma_{A,B} \to \mathcal{E}^{ws}(A,B)$ and a canonical map
\[
\bar{E} \colon \frac{\Sigma_{A,B}}{\sim} \to E^{ws}(A,B), 
\quad [+'_{B \times A}] \mapsto [E_{+'_{B \times A}}].
\]

\begin{theorem}\label{sumproduct}
For any topological Abelian groups $A, B$, the canonical map
\[
\bar{E} \colon \frac{\Sigma_{A,B}}{\sim} \to E_{\TA}^{ws}(A,B)
\]
is a pointed bijection. Moreover, there is an Abelian group structure on $\frac{\Sigma_{A,B}}{\sim}$ induced by $\bar{E}$ such that $\bar{E}$ is an isomorphism of Abelian groups.
\end{theorem}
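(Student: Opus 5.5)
Write $\bar{E}$ for the canonical map of Theorem~\ref{sumproduct}. The plan is to check three things in order: that $\bar{E}$ is well defined and lands in $E_{\TA}^{ws}(A,B)$, that it is injective, and that it is surjective. The group structure is then obtained by transport of structure along $\bar{E}$.

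\emph{Well-definedness.} Take $+'\in\Sigma_{A,B}$. We must show that $E_{+'}$ is a $ws$-extension. The map $p_2\colon (B\times A,+')\to (A,+)$ is a continuous surjective homomorphism. It is open because, as a map of spaces, it is the product projection. Its kernel is $B\times\{0\}=i_1(B)$, and $i_1$ is a homomorphism onto this kernel which is a homeomorphism onto its image with the subspace topology. So $E_{+'}$ is an extension in the sense of Definition~\ref{ws-ext}. The map $s(a)=(0,a)$ is a continuous pointed section of $p_2$, so the extension $ws$-splits. If $+'\sim+''$ via $\gamma$, then $\gamma$ is by definition a morphism $E_{+'}\to E_{+''}$ in $\mathcal{E}^{ws}_{\TA}(A,B)$, hence $[E_{+'}]=[E_{+''}]$. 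The structure $+$ itself maps to the split extension, so $\bar{E}$ is pointed.

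\emph{Injectivity.} Suppose $[E_{+'}]=[E_{+''}]$. By Lemma~\ref{isom}(ii), $\approx$ coincides with $\sim$ on $ws$-extensions, and every morphism is a topological isomorphism. So there is a topological isomorphism $\theta\colon (B\times A,+')\to(B\times A,+'')$ with $\theta i_1=i_1$ and $p_2\theta=p_2$. This is exactly the relation $+'\sim+''$.

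\emph{Surjectivity.} This is the main step. Let $E=(B\xrightarrow{j}X\xrightarrow{p}A)$ be a $ws$-extension with continuous pointed section $\mathbf{s}$. By Lemma~\ref{sectionretraction} and Remark~\ref{homeomorphism}(iii), the map $\boldsymbol{\phi}(b,a)=j(b)+\mathbf{s}(a)$ is a pointed homeomorphism $B\times A\to X$ with inverse $\boldsymbol{\varphi}=(\mathbf{r},p)$. Transport the sum of $X$ by setting
\[
u+'v=\boldsymbol{\varphi}\bigl(\boldsymbol{\phi}(u)+\boldsymbol{\phi}(v)\bigr).
\]
Then $(B\times A,+')$ is a topological Abelian group, and $\boldsymbol{\phi}$ is a topological isomorphism onto $X$. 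We also check that $\boldsymbol{\phi} i_1=j$ and $p\boldsymbol{\phi}=p_2$. From these, $i_1=\boldsymbol{\varphi}j$ and $p_2=p\boldsymbol{\phi}$ are continuous homomorphisms, so $+'\in\Sigma_{A,B}$. Moreover $\boldsymbol{\phi}\colon E_{+'}\to E$ is a morphism of extensions, so $\bar{E}[+']=[E]$.

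Explicitly, the transported sum is
\[
(b,a)+'(b',a')=\bigl(b+b'+\mathbf{r}(\mathbf{s}(a)+\mathbf{s}(a')),\,a+a'\bigr).
\]
The obstacle is making sure this is genuinely a group operation in which $B$ carries its original topology, and not just a set-theoretic bijection. That is exactly what the identities in Remark~\ref{homeomorphism} guarantee.

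\emph{Group structure.} Since $\bar{E}$ is a bijection, define $[+']+[+'']=\bar{E}^{-1}\bigl(\bar{E}[+']+\bar{E}[+'']\bigr)$ using the Baer sum. With this operation $\bar{E}$ is an isomorphism of Abelian groups by construction. As a remark, one could describe this sum concretely: composing the Baer-sum construction with surjectivity, it corresponds to adding the cocycles $\mathbf{r}(\mathbf{s}(a)+\mathbf{s}(a'))$. That description is not needed for the theorem itself.
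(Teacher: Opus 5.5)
Your proof is correct and follows the same approach as the paper. For surjectivity, both transport the group structure of $X$ to $B\times A$ along the homeomorphism $\boldsymbol{\phi}(b,a)=j(b)+\mathbf{s}(a)$ and check that $i_1$ and $p_2$ become continuous homomorphisms; the group structure on $\Sigma_{A,B}/\sim$ is then transported along $\bar{E}$. Your explicit check of well-definedness, and your use of Lemma~\ref{isom}(ii) for injectivity (which shows that any morphism of extensions is automatically a topological isomorphism, as the relation $\sim$ on $\Sigma_{A,B}$ requires), spell out what the paper passes over as following ``by the definition of the relation $\sim$''.
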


\begin{proof}

By the definition of the relation $\sim$,  {the map} $\frac{\Sigma_{A,B}}{\sim} \to E_{\TA}^{ws}(A,B)$ is injective.

 To prove that it is a surjective map, assume that  $E=(\xymatrix{B \ar[r]^\alpha & X\ar[r]^\beta & A})$  is an $ws$-extension of $B$ by $A$.
  By Lemma \ref{sectionretraction} and Remark \ref{homeomorphism}, there is  a  pointed
continuous homeomorphism
$\boldsymbol{\phi} \colon B\times A \to X$
such that the following diagram is commutative:

 $$\xymatrix{ \ar[d]_{\id_B}  B \ar[r]^{i_1}& \ar[d]_{\boldsymbol{\phi}}   B \times A \ar[r]^{p_2} & A \ar[d]^{\id_A} \\
 B \ar[r]^{\alpha} &  X \ar[r]^{\beta} & A.  }$$

Since $\boldsymbol{\phi} $ is a homeomorphism, there is a unique continuous sum $+'$ on $B \times A$ such that
$\boldsymbol{\phi} \colon (B\times A, +') \to (X,+)$ is a topological isomorphism. Moreover,
\begin{center}
$i_1(b+b')=  \boldsymbol{\phi}^{-1}\alpha(b+b')=\boldsymbol{\phi}^{-1}(\alpha(b)+\alpha(b'))= (b,0) +' (b',0),$

$  p_2((b,a)+' (b',a'))=  \beta  \boldsymbol{\phi}(((b,a)+' (b',a')))= \beta \boldsymbol{\phi}((b,a))+ \beta\boldsymbol{\phi}( (b',a'))=a+a'.$
\end{center}
 \end{proof}

Following \cite{Fuchs2015}, an alternative way of studying  $E_{\A}(A,B)$ and $E_{\TA}^{ws}(A,B)$  consists of taking
$$\mathcal{Z}(A, B) = \{ f \colon A \times A \to B\ |\ f\ \text{is a pointed map satisfying the Eqs } \eqref{com}~\text{and}~\eqref{aso} \}$$
 \begin{equation} \label{com}
f(a, a')=f(a',a),\ a, a' \in A
\end{equation}
\begin{equation} \label{aso}
f(a,a')+f(a+a',a'')=f(a,a'+a'')+f(a',a''),\ a, a', a'' \in A,
\end{equation}
$$\mathcal{Z}_c(A, B) = \{ f \colon A \times A \to B\ |\ f \in Z(A, B)  \text{ and }  f \text{ is a pointed continuous  map}\},$$
where $(0,0)$ is taken as the  base point of $A\times A$, and $0$ is the base point of $B$.

For each pointed  $h \colon A \to B$, define $D(h) \colon A \times A \to B$ by
\begin{center}
$D(h)(a,a')=h(a)+h(a')-h(a+a'), \quad (a,a') \in A \times A.$
\end{center}
Note that  $D(h) \in \mathcal{Z}(A, B)$, and if $h$ is continuous, then $D(h) \in \mathcal{Z}_c(A, B).$

We also can take
\begin{center}
$\mathcal{B}(A, B)=\{ f \in  \mathcal{Z}(A, B)\ | \text{ there is a pointed map } h \colon A \to B \text{ such that } f=D(h)\}$ and

$\mathcal{B}_c(A, B)=\{ f \in  \mathcal{Z}_c(A, B)\ | \text{ there is a  pointed continuous  map } h \colon A \to B \text{ such that } f=D(h) \}.$
\end{center}
The sum of the Abelian group  $B$ induces canonical  structures of Abelian groups in $\mathcal{Z}(A, B)/\mathcal{B}(A, B)$ and in $\mathcal{Z}_c(A, B)/\mathcal{B}_c(A, B).$

 For a given a $ws$-extension  $E=(\xymatrix{B \ar[r]^\alpha & X\ar[r]^\beta & A})$ of $B$ by $A$, we pick a continuous pointed  section ${\bf{s}} \colon A \to X$ that assigns an element of $X$ in the coset corresponding to $a$;
that is ${\bf{s}}(a)\in \beta^{-1}(a)$.
 By Lemma \ref{sectionretraction} and Remark \ref{homeomorphism}, there is  a
unique continuous pointed retraction ${\bf{r}}\colon X \to B$ (${\bf{r}} j=\id_B$ in $\pT$, ${\bf{r}}(0)=0$), such that ${\bf{r}}(x+\alpha(b))={\bf{r}}(x)+b$ for every $x \in X$,  $b \in B$,  and $\alpha{\bf{r}}+{\bf{s}}\beta=\id_X.$

  Now, we can define the pointed  continuous map $f_E=f$,
 $f\colon A \times A \to B,$
 $ f(a, a') = {\bf{r}} ( {\bf{s}}(a)+  {\bf{s}}(a')- {\bf{s}}(a+a')).$
 \medskip

 \begin{theorem}\label{cocycles} The map $E_{\TA}^{ws}(A,B) \to \mathcal{Z}_c(A, B)/\mathcal{B}_c(A,B)$, $[E]\to [f_E]$ is well defined, and it is  an  isomorphism of Abelian groups:
\begin{center}
$E_{\TA}^{ws}(A,B) \cong \mathcal{Z}_c(A, B)/\mathcal{B}_c(A,B),$
\end{center}
 where $E_{\TA}^{ws}(A,B)$ is provided with the Baer sum, and the sum in $\mathcal{Z}_c(A, B)/\mathcal{B}_c(A,B)$ is induced by the sum in $B$.
\end{theorem}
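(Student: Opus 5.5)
The plan is to mimic the classical algebraic argument (cf.~\cite[Chapter 9]{Fuchs2015}) and check at each step that continuity is preserved. Two tools will carry most of the load: Theorem~\ref{sumproduct}, which lets us work with structures $+'$ on $B\times A$, and Lemma~\ref{sectionretraction} with Remark~\ref{homeomorphism}, which supply continuous sections and retractions.

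\emph{Step 1: $f_E$ is a continuous cocycle.} The map $f_E$ is continuous because it is a composite of continuous maps, and it is pointed since $\mathbf{s}(0)=0$. The expression $\mathbf{s}(a)+\mathbf{s}(a')-\mathbf{s}(a+a')$ lies in $\alpha(B)$, so $\mathbf r$ of it equals its $\alpha$-preimage. Hence \eqref{com} follows from commutativity of $X$, and \eqref{aso} follows because both sides equal the $\alpha$-preimage of $\mathbf{s}(a)+\mathbf{s}(a')+\mathbf{s}(a'')-\mathbf{s}(a+a'+a'')$.

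\emph{Step 2: the class $[f_E]$ is independent of choices.} Take another continuous pointed section $\mathbf{s}'$. Then $h=\alpha^{-1}(\mathbf{s}'-\mathbf{s})=\mathbf r\mathbf s'$ is a continuous pointed map $A\to B$, and $f'=f+D(h)$. If $\theta\colon E\to E'$ is a morphism, then by Lemma~\ref{isom} the map $\theta\mathbf{s}$ is a continuous section of $E'$ and it yields exactly the same cocycle. So the map $[E]\mapsto[f_E]$ is well defined.

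\emph{Step 3: the map is bijective.} Given $f\in\mathcal Z_c(A,B)$, define on $B\times A$ the sum
\[
(b,a)+_f(b',a')=(b+b'+f(a,a'),\,a+a').
\]
Conditions \eqref{com} and \eqref{aso} together with $f(0,0)=0$ make this an Abelian group with identity $(0,0)$. Note that $f(a,0)=f(0,0)=0$ follows from \eqref{aso}. The inverse is $(-b-f(a,-a),-a)$. Continuity of $f$ makes addition and inversion continuous in the product topology, so $(B\times A,+_f)$ is a topological Abelian group. The maps $i_1$ and $p_2$ are continuous homomorphisms. I expect the main obstacle to be checking that this gives an extension in $\TA$. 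We must show that $p_2$ is open and that $i_1$ is an embedding onto the kernel; both hold because the topology is the product topology. With $s(a)=(0,a)$ as section we get $f_{E_f}=f$, so the map is surjective.

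For injectivity, suppose $f_E=D(h)$ with $h$ continuous. Then $\mathbf{s}-\alpha h$ is a continuous section that is a homomorphism, so $X\cong B\times A$ topologically via $(b,a)\mapsto\alpha(b)+(\mathbf s-\alpha h)(a)$, and $E$ is trivial. More generally, two extensions with cohomologous cocycles $f$ and $f+D(h)$ are identified through $(b,a)\mapsto(b+h(a),a)$. This is a homeomorphism and a homomorphism $+_{f+D(h)}\to+_f$ (up to sign convention), as in Theorem~\ref{sumproduct}.

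\emph{Step 4: additivity.} Compute the Baer sum concretely for $E_f$ and $E_g$. The pushout along $\nabla^B$ of $E_f\times E_g$ can be realized on $B\times A\times A$ with cocycle $(f+g)$ applied componentwise, and the pullback along $\Delta_A$ then gives $(B\times A,+_{f+g})$. The delicate point is that pushouts in $\TA$ carry the quotient topology. I would verify that this quotient topology coincides with the product topology by using the continuous section $(b,a,a')\mapsto$ the class of $((b,a),(0,a'))$. Combined with Step 3, this shows that $[E_f]+[E_g]=[E_{f+g}]$, which completes the proof.
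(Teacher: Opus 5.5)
Your proposal is correct and follows essentially the same route as the paper: the same cocycle $f_E$, the same twisted sum $+_f$ with $f_{E_f}=f$, and the same cocycle computation for the Baer sum. You also add checks the paper glosses over: invariance under morphisms $\theta$, that $E_f$ is an extension in $\TA$ because of the product topology, the explicit isomorphism $(b,a)\mapsto(b+h(a),a)$, and the pushout topology.

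One step you should state explicitly, as the paper does, is that every $ws$-extension satisfies $E\sim E_{f_E}$ via $\boldsymbol{\phi}(b,a)=\alpha(b)+\mathbf{s}(a)$. This is a one-line check that $\boldsymbol{\phi}((b,a)+_{f_E}(b',a'))=\boldsymbol{\phi}(b,a)+\boldsymbol{\phi}(b',a')$. It is needed because your Step 4 only treats Baer sums of extensions of the form $E_f$, $E_g$, so it requires every class to be represented this way; invoking Theorem~\ref{sumproduct} alone yields some $+'$, not necessarily $+_{f_E}$.
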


\begin{proof}

For a given a $ws$-extension $E=(\xymatrix{B \ar[r]^\alpha & X\ar[r]^\beta & A})$ of $B$ by $A$ {{and a continuous pointed section ${\bf{s}}\colon A \to X$}},
consider  the  pointed  continuous map $f_E=f$:
\begin{center}
$f\colon A \times A \to B
 , \quad  f(a, a') = {\bf{r}} ( {\bf{s}}(a)+  {\bf{s}}(a')- {\bf{s}}(a+a')),$
\end{center}
{where ${\bf{r}} \colon X \to B$ is the continuous pointed retraction associated to ${\bf{s}}$; see (ii) in Remark \ref{homeomorphism}. }

 Note that for $a, a' \in A$, we have
\begin{center}
 ${\bf{s}}(a)+{\bf{s}}(a')-{\bf{s}}(a+a')= (\alpha{\bf{r}}+{\bf{s}}\beta) ({\bf{s}}(a)+{\bf{s}}(a')-{\bf{s}}(a+a'))= \alpha({\bf{r}} ({\bf{s}}(a)+{\bf{s}}(a')-{\bf{s}}(a+a')))= \alpha f(a,a'),$
\\
$  f(a, a') = {\bf{r}} ( {\bf{s}}(a)+  {\bf{s}}(a')- {\bf{s}}(a+a'))= {\bf{r}} ( {\bf{s}}(a')+  {\bf{s}}(a)- {\bf{s}}(a'+a))=f(a',a).$
\end{center}

And for $a, a', a'' \in A$,
$ {\bf{s}}(a+a')+  {\bf{s}}(a'')- {\bf{s}}((a+a')+a'')= \alpha (f(a+a',a'')),$
${\bf{s}}(a)+  {\bf{s}}(a'+a'')- {\bf{s}}(a+(a'+a''))= \alpha (f(a, a'+a'')),$
$ \alpha f(a,a')+ \alpha f(a+a',a'')= {\bf{s}}(a)+{\bf{s}}(a')-{\bf{s}}(a+a')+ {\bf{s}}(a+a')+  {\bf{s}}(a'')- {\bf{s}}((a+a')+a'')= {\bf{s}}(a)+{\bf{s}}(a')+  {\bf{s}}(a'')- {\bf{s}}(a+(a'+a''))=   {\bf{s}}(a)+  {\bf{s}}(a'+a'')- {\bf{s}}(a+(a'+a''))+ {\bf{s}}(a')+  {\bf{s}}(a'')- {\bf{s}}(a'+a''))= \alpha f(a,a'+a'')+ \alpha f(a',a'').$

Since $\alpha$ is a monomorphism, we have
\begin{center}
$  f(a,a')+ f(a+a',a'')=  f(a,a'+a'')+  f(a',a'').$
\end{center}
Therefore, the pointed  continuous map
$f\colon A \times A \to B$ satisfies the Eqs \eqref{com} and \eqref{aso}. This implies that $f \in \mathcal{Z}_c(A,B).$

Note that for two pointed sections ${\bf{s_1}}, {\bf{s_2}}$ of $\beta \colon X \to A$, there is a unique  pointed continuous map $h\colon A \to B$ such that
${\bf{s_1}}(a)-{\bf{s_2}}(a)={\alpha(h(a))}$ for every $a \in A$. Then, for the maps $f_1, f_2$ associated to  ${\bf{s_1}}, {\bf{s_2}}$,  we have
\begin{center}
  ${\bf{s_1}}(a)+{\bf{s_1}}(a')-{\bf{s_1}}(a+a')= \alpha f_1(a,a'),$

   ${\bf{s_2}}(a)+{\bf{s_2}}(a')-{\bf{s_2}}(a+a')= \alpha f_2(a,a'),$

     ${\bf{s_1}}(a)-{\bf{s_2}}(a)+{\bf{s_1}}(a')-{\bf{s_2}}(a')-({\bf{s_1}}(a+a')-{\bf{s_2}}(a+a'))= \alpha (f_1(a,a')-f_2(a,a')).$
\end{center}
 Therefore, $f_1(a,a')-f_2(a,a')=h(a)+h(a')-h(a+a')$. This implies that $f_1-f_2 \in \mathcal{B}_c(A,B).$

Now, for a given $f \in \mathcal{Z}_c(A,B)$, one can define a  new continuous map on $B\times A$:
\begin{center}
$(b,a)+_f (b',a')=(b+b'+f(a,a'), a+a').$
\end{center}
The continuous map $+_f \colon (B\times A) \times  (B\times A) \to B\times A$ satisfies the following properties:

Commutative: Using property \eqref{com}, we have
\begin{center}
$(b,a)+_f (b',a')=(b+b'+f(a,a'), a+a')=(b'+b+f(a',a), a'+a)=(b',a')+_f (b,a)$
\end{center}

Associative:
Since $f$ satisfies \eqref{aso}, we have
$((b,a)+_f (b',a'))+_f(b'', a'')=(b+b'+f(a,a'), a+a')+_f(b'',a'')= (b+b'+f(a,a')+b''+f(a+a',a''), (a+a')+a'')= (b+(b'+b''+f(a',a''))+f(a,a'+a''), a+(a'+a'')=(b,a)+_f((b'+b''+f(a',a''), a'+a'')=(b,a)+_f ( (b',a')+_f(b'', a''))$.

We can also check the properties of the neutral element and inverse element for the continuous sum~$+_f$.

Since $f$ is a pointed map, we have  $f(0,0)=0$. By Eq \eqref{aso}, for $a'=a''=0$, it follows that
$f(a,0)+f(a,0)=f(a,0)+f(0,0)$. Therefore,  we have $f(a,0)=f(0,0)=0=f(0,a)$, where we have also applied Eq \eqref{com}. Now, we have
\begin{center}
$(b,a)+_f (0,0)=(b+0+f(a,0), a+0)=(b,a),$

$(b,a)+_f (-b-f(a,-a),-a)=(b-b-f(a,-a)+f(a,-a), a-a)=(0,0).$
\end{center}
Then, we have that $(B\times A, +_f ) $ has the structure of a topological Abelian group.

Note that the inclusion map $i_1 \colon (B, +) \to (B \times A, +_f)$ is a continuous  homomorphism $i_1(b+b')=(b+b',0)=(b+b'+f(0,0), 0+0)=(b,0)+_f (b',0),$
and the projection $p_2 \colon (B \times A, +_f) \to (A,+)$ is a continuous homomorphism
$p_2((b,a)+_f (b',a'))=p_2((b+b'+f(a,a'), a+a')) = a+a'.$

We also have that $i_2 \colon A \to B \times A$, $i_2(a)= (0,a)$ is a continuous section.

This implies that $E_f=(\xymatrix{  (B, +) \ar[r]^-{i_1}& (B \times A,+'_{B \times A}) \ar[r]^-{p_2} & (A,+) })$ is a $ws$-extension.

In order to check that $E$ is equivalent to $E_{E_f}$, we have that  the
continuous map
$\boldsymbol{\phi} \colon B\times A \to X,  \boldsymbol{\phi}(b,a)= \alpha(b)+ {\bf{s}}(a)$ verifies
\begin{center}
$\beta \phi (b,a)= \beta (\alpha(b)+{\bf{s}}(a))=a,$

$\phi i_1(b)=\phi (b,0)= \alpha(b),$

$\phi i_2(a) =\phi (0,a)=0+{\bf{s}}(a) = {\bf{s}}(a).$
\end{center}

These {equations} imply that the following diagram commutes and the pointed section ${\bf{s}}$ corresponds to the pointed section $i_2$.

 $$\xymatrix{ \ar[d]_{\id_B}  B \ar[r]^{i_1}& \ar[d]_{\boldsymbol{\phi}}   B \times A \ar[r]^{p_2} & A \ar[d]^{\id_A} \\
 B \ar[r]^{\alpha} &  X \ar[r]^{\beta} & A.  }$$

We also have that
$\phi ((b,a)+_f (b',a'))=\phi (b+b'+f(a,a'), a+a')= \alpha(b+b'+f(a,a'))+ {\bf{s}}(a+a')$
$= \alpha(b)+ {\bf{s}}(a)+\alpha(b')+ {\bf{s}}(a)=\phi (b,a)+ \phi (b',a')$,
and  $\phi \colon (B \times A, +_f) \to (X,+) $ is an topological isomorphism  in $\TA$.

Now given $f\in \mathcal{Z}_c(A,B)$, we check that $f_{E_f}=f$.

From the equations
$(i_2(a) +_f i_2(a')-_f i_2(a+a')=i_1(f_{E_f} (a,a'))=(f_{E_f} (a,a'),0), $
$(i_2(a) +_f i_2(a'))-_f i_2(a+a')= ((0,a) +_f (0,a'))-_f (0,a+a')=((f(a,a'), a+a') -_f (0,a+a'))=(f(a,a'), a+a')+_f(-f(a+a', -(a+a')), -(a+a')))= (f(a,a')-f(a+a', -(a+a'))+f(a+a',-(a+a')), 0))=((f(a,a'),0).$

It follows that $f(a.a')=f_{E_f}(a,a')$ for every $a, a' \in A$.

Now the proof is completed by verifying that the map
\[
E_{\mathsf{TA}}^{ws}(A,B)\;\longrightarrow\;
\mathcal{Z}_c(A,B)/\mathcal{B}_c(A,B),
\qquad [E] \longmapsto [f_E],
\]
is a homomorphism of Abelian groups (for the algebraic case, see
\cite[Chapter~9]{Fuchs2015}).

Suppose that \(E\) and \(E'\) are \(ws\)-extensions of \(B\) by \(A\).
Their Baer sum is given by
\[
E + E' \;=\; \Delta_A^{*}\bigl((\nabla^{B})_{*}(E \times E')\bigr).
\]
Let \(f_E, f_{E'} \colon A \times A \to B\) be the cocycles associated to
the \(ws\)-extensions \(E\) and \(E'\), respectively.
It is clear that
\[
(f_E \times f_{E'} )\bigl((a_1,a_2),(a_1',a_2')\bigr)
   = \bigl(f_E(a_1,a_2),\, f_{E'}(a_1',a_2')\bigr)
\]
is the cocycle associated to \(E \times E'\).
The pointed continuous map
\[
((a_1,a_2),(a_1',a_2')) \longmapsto
   f_E(a_1,a_2) + f_{E'}(a_1',a_2')
\]
is the cocycle associated to \((\nabla^{B})_{*}(E \times E')\), and
\[
(f_E + f_{E'})(a_1,a_2)
   = f_E(a_1,a_2) + f_{E'}(a_1,a_2)
\]
is the cocycle associated to
\(\Delta_A^{*}\bigl((\nabla^{B})_{*}(E \times E')\bigr)\).
Therefore,
\[
f_{[E] + [E']} \;=\; [\,f_E + f_{E'}\,].
\]
\end{proof}

\section{Comparing $ws$-extensions and algebraic extensions}

Suppose that $A, B \in \TA$. Consider on the Hom-sets
$\mathrm{Hom}_{\pS}(A,B)$ and $\mathrm{Hom}_{\pT}(A,B)$ that
the Abelian group structure is induced by the addition in $B$,
and let us examine the following commutative diagram of Abelian groups:
\[
\xymatrix{
\mathrm{Hom}_{\pT}(A,B) \ar[r]^{in} \ar[d]_{D_c} 
& \mathrm{Hom}_{\pS}(A,B) \ar[r] \ar[d]_{D}
& \mathrm{Hom}_{\pS}(A,B)/\mathrm{Hom}_{\pT}(A,B) \ar[d]^{\bar{D}} \\
\mathcal{Z}_c(A,B) \ar[r]^{in}
& \mathcal{Z}(A,B) \ar[r]
& \mathcal{Z}(A,B)/\mathcal{Z}_c(A,B)
}
\]
where
\[
D_c(h)(a,a')=h(a)+h(a')-h(a+a'), \quad h \in \mathrm{Hom}_{\pT}(A,B),
\]
\[
D(h)(a,a')=h(a)+h(a')-h(a+a'), \quad h \in \mathrm{Hom}_{\pS}(A,B),
\]
\[
\bar{D}([h])=[D(h)], \quad h \in \mathrm{Hom}_{\pS}(A,B),
\]
and $in$ denotes the canonical inclusions.

\begin{definition}
For $A, B \in \TA$ and $i \in \{0,1\}$, consider the Abelian groups
\[
\overline{\mathrm{ex}}^0(A,B)=\mathrm{Ker}(\bar{D}), 
\quad
\overline{\mathrm{ex}}^1(A,B)=\mathrm{Coker}(\bar{D}).
\]
\end{definition}

\begin{proposition}\label{diagram}
Let $A, B$ be topological Abelian groups in $\TA$. Then, the following diagram in $\A$ is commutative:
\[
\xymatrix{
\ar[d]\mathrm{Hom}_{\TA}(A,B) \ar[r]^{i_*^0}
& \ar[d]   \mathrm{Hom}_{\A}(A,B) \ar[r]^{p_*^0}
&  \ar[d] \overline{\mathrm{ex}}^0(A,B) \\
\mathrm{Hom}_{\pT}(A,B)  \ar[r]^{in} \ar[d]_{D_c}
& \mathrm{Hom}_{\pS}(A,B) \ar[r] \ar[d]_{D}
& \mathrm{Hom}_{\pS}(A,B)/\mathrm{Hom}_{\pT}(A,B) \ar[d]^{\bar{D}} \\
\mathcal{Z}_c(A,B) \ar[r]^{in} \ar[d]
& \mathcal{Z}(A,B) \ar[r] \ar[d]
& \mathcal{Z}(A,B)/\mathcal{Z}_c(A,B) \ar[d] \\
E_{\TA}^{ws}(A,B) \ar[r]^{i_*^1}
& E_{\A}(A,B) \ar[r]^{p_*^1}
& \overline{\mathrm{ex}}^1(A,B)
}
\]
and we also have:
\begin{itemize}
\item[(i)] $\mathrm{Ker}(D_c)=\mathrm{Hom}_{\TA}(A,B)$,
\item[(ii)] $\mathrm{Ker}(D)=\mathrm{Hom}_{\A}(A,B)$,
\item[(iii)] $\overline{\mathrm{ex}}^0(A,B)\cong \{[h] \in \mathrm{Hom}_{\pS}(A,B)/\mathrm{Hom}_{\pT}(A,B)\mid D(h) \in \mathcal{Z}_c(A,B)\}$,
\item[(iv)] $\mathrm{Coker}(D_c)\cong E_{\TA}^{ws}(A,B)$,
\item[(v)] $\mathrm{Coker}(D)\cong E_{\A}(A,B)$,
\item[(vi)] $\overline{\mathrm{ex}}^1(A,B) \cong \mathcal{Z}(A,B)/( \mathcal{Z}_c(A,B)+\mathcal{B}(A,B))$.
\end{itemize}

The homomorphisms in the diagram are given by
$i_*^0(h)=h$, $h \in \mathrm{Hom}_{\TA}(A,B)$,
and $p_*^0(h')=[h']$, $h' \in \mathrm{Hom}_{\A}(A,B)$.
For a $ws$-extension $E$, we have $i_*^1([E])=[E]$, and for an algebraic extension $E'$, $p_*^1(E')=[f_{E'}]$.
\end{proposition}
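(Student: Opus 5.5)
The plan is to treat the middle two rows as a morphism of short exact sequences of Abelian groups and read everything else off from it. The vertical maps are $D_c$, $D$ and $\bar D$, and the top and bottom rows of the big diagram are then kernels and cokernels of these vertical maps. First I will check that $\bar{D}$ is well defined. If $h\in\mathrm{Hom}_{\pT}(A,B)$, then $D(h)=D_c(h)\in\mathcal{Z}_c(A,B)$, so $D$ sends the subgroup $\mathrm{Hom}_{\pT}(A,B)$ into $\mathcal{Z}_c(A,B)$. Commutativity of the two middle squares is then immediate, since $D$ restricted to continuous maps is $D_c$. Both middle rows are short exact: the first is the quotient of $\mathrm{Hom}_{\pS}(A,B)$ by its subgroup of continuous pointed maps, and the second is the quotient of $\mathcal{Z}(A,B)$ by $\mathcal{Z}_c(A,B)$. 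The group structures are pointwise, and $D$ is additive because $B$ is Abelian.

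Next I will identify the kernels and cokernels.

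(i) and (ii): a pointed map $h$ satisfies $D(h)=0$ exactly when $h(a+a')=h(a)+h(a')$. So $\mathrm{Ker}(D)=\mathrm{Hom}_{\A}(A,B)$, and intersecting with the continuous maps gives $\mathrm{Ker}(D_c)=\mathrm{Hom}_{\TA}(A,B)$.

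(iv): by definition $\mathrm{Im}(D_c)=\mathcal{B}_c(A,B)$, so $\mathrm{Coker}(D_c)=\mathcal{Z}_c(A,B)/\mathcal{B}_c(A,B)$. This is isomorphic to $E^{ws}_{\TA}(A,B)$ by Theorem~\ref{cocycles}.

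(v): this is the algebraic analogue, $\mathcal{Z}(A,B)/\mathcal{B}(A,B)\cong E_{\A}(A,B)$. I will get it either by citing \cite[Chapter~9]{Fuchs2015} or by rerunning the proof of Theorem~\ref{cocycles} with the topologies forgotten, which amounts to taking discrete topologies.

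(iii): a class $[h]$ lies in $\mathrm{Ker}(\bar D)$ iff $D(h)\in\mathcal{Z}_c(A,B)$. This condition does not depend on the representative, since changing $h$ by a continuous pointed map changes $D(h)$ by an element of $\mathcal{B}_c\subseteq\mathcal{Z}_c$.

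(vi): $\mathrm{Im}(\bar D)$ is the image of $\mathcal{B}(A,B)$ in $\mathcal{Z}/\mathcal{Z}_c$. Hence $\mathrm{Coker}(\bar D)\cong \mathcal{Z}/(\mathcal{Z}_c+\mathcal{B})$ by the third isomorphism theorem.

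Then I will describe the outer maps and check commutativity of the top and bottom squares. On top, $i^0_*$ is the restriction of the inclusion $in$ to kernels. The map $p^0_*$ is the restriction of the quotient map to kernels: a homomorphism $h'$ has $D(h')=0\in\mathcal{Z}_c$, so $[h']\in\mathrm{Ker}(\bar D)$. Both top squares commute tautologically.

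On the bottom, the maps are the ones induced on cokernels. Under the identifications (iv) and (v), $i^1_*$ is induced by $in\colon \mathcal{Z}_c\to\mathcal{Z}$, which sends $[f]$ to $[f]$. I must check that this corresponds to "forgetting the topology", $[E]\mapsto[E]$. The point is that the cocycle $f_E$ built from a continuous section $\mathbf{s}$ is also a valid algebraic cocycle for the underlying extension $WE$. The algebraic isomorphism of (v) sends $[WE]$ to the class of the cocycle of any set-theoretic section, and the continuous $\mathbf{s}$ is in particular such a section, so $[WE]\mapsto[f_E]$. Similarly, $p^1_*$ is induced by the quotient map $\mathcal{Z}\to\mathcal{Z}/\mathcal{Z}_c$ followed by the quotient map onto $\mathrm{Coker}(\bar D)$. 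So for an algebraic extension $E'$, $p^1_*([E'])=[f_{E'}]$, and this is well defined because coboundaries in $\mathcal{B}(A,B)$ are killed in $\mathrm{Coker}(\bar D)$.

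The main obstacle is expected to be the commutativity of the bottom squares, more precisely making sure the isomorphisms of (iv) and (v) are compatible with the forgetful functor $\mathcal{E}_{\TA}\to\mathcal{E}_{\A}$. For this I will rely on the section-independence argument already given in the proof of Theorem~\ref{cocycles}: two set-theoretic sections yield cocycles differing by some $D(h)$. The remaining steps are routine diagram bookkeeping, and they also set up the snake lemma for the later six-term sequence.
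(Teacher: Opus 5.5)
Your proposal is correct and follows the same route as the paper, which invokes Theorem~\ref{cocycles} for (iv) and its algebraic analogue for (v), and calls the remaining kernel/cokernel identifications and commutativity checks routine. You carry out that routine verification explicitly, including the point that a continuous section is in particular a set-theoretic one, so the identifications (iv) and (v) are compatible with the forgetful map $i^1_*$.
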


\begin{proof}
By Theorem~\ref{cocycles}, we have $\mathrm{Coker}(D_c)\cong E_{\TA}^{ws}(A,B)$, and similarly $\mathrm{Coker}(D)\cong E_{\A}(A,B)$. The rest of the proof is a routine verification.
\end{proof}

\begin{theorem}\label{sixterms}
Given $A, B \in \TA$, there is a connecting homomorphism
\[
\delta \colon \overline{\mathrm{ex}}^0(A,B) \to E_{\TA}^{ws}(A,B),
\]
given by $\delta([h])=[E_{in^{-1}(D(h))}]$, such that the following sequence is exact:

\[
\xymatrix{
0 \ar[r] 
& \mathrm{Hom}_{\TA}(A,B) \ar[r]^{i_*^0}
& \mathrm{Hom}_{\A}(A,B) \ar[r]^{p_*^0}
& \overline{\mathrm{ex}}^0(A,B) \ar[r]^-{\delta}
&
\\
& E_{\TA}^{ws}(A,B) \ar[r]^{i_*^1}
& E_{\A}(A,B) \ar[r]^{p_*^1}
& \overline{\mathrm{ex}}^1(A,B) \ar[r]
& 0.
}
\]

\begin{proof}
It is a consequence of Proposition~\ref{diagram} and the Snake Lemma; see \cite[Lemma 2.6 in Chapter I]{Fuchs2015} or \cite[Lemma 5.1 in Chapter III]{Hilton-Stammbach}.
\end{proof}
\end{theorem}

\section{Examples}\label{examples}

{In this section, we analyze the properties  of the Abelian group of  $ws$-extensions in the case of topological Abelian groups with trivial and discrete topologies. We also give examples of $ws$-extensions induced by covering extensions and others related with Bohr topologies.}

\subsection{Trivial topologies and $ws$-extensions }

Given a topological  Abelian group $A$, we can consider the   subgroup  $N_0(A)$  given as follows:\medskip

\begin{lemma}\label{trivialtop} Let $A$ be a topological Abelian group, and consider $$N_0(A)=\bigcap \{U| U \text{ is a neighbourhood  at } 0\}.$$
Then, $N_0(A)$ is a subgroup of $A$,  and it has  trivial topology. Moreover,
\begin{itemize}
  \item[(i)] $N_0 \colon \TA \to \TA$ is a functor,
  \item [(ii)] if $A, B \in \TA$ and $A$  has the trivial topology, then $$\mathrm{Hom}_{\TA}(A, B) \cong  \mathrm{Hom}_{\TA}(A, N_0(B))
  \cong  \mathrm{Hom}_{\A}(A, N_0(B)),$$
    \item [(iii)] and if $A, B \in \TA$ and $A$  has the trivial topology, then $$\mathrm{Hom}_{\pT}(A, B) \cong  \mathrm{Hom}_{\pT}(A, N_0(B)).$$
\end{itemize}
\end{lemma}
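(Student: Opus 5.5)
The plan is to first show that $N_0(A)$ is a subgroup, using only continuity of the group operations. For $x,y\in N_0(A)$ and a neighbourhood $U$ of $0$, continuity of $(u,v)\mapsto u-v$ at $(0,0)$ gives a neighbourhood $V$ of $0$ with $V-V\subseteq U$. Since $x,y\in V$, we get $x-y\in U$, and hence $x-y\in N_0(A)$; clearly $0\in N_0(A)$. For the trivial topology, I would take a point $x\in N_0(A)$ and an open set $O$ of $A$ with $x\in O$. Then $O-x$ is a neighbourhood of $0$, so it contains $N_0(A)$; in particular $-x\in O-x$, so $0\in O$. Conversely, if $0\in O$ then $N_0(A)\subseteq O$. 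Thus every open set of $A$ meets $N_0(A)$ in either $\emptyset$ or $N_0(A)$, and the subspace topology on $N_0(A)$ is trivial.

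For (i), I would show that a continuous homomorphism $f\colon A\to B$ satisfies $f(N_0(A))\subseteq N_0(B)$. If $U$ is a neighbourhood of $0$ in $B$, then $f^{-1}(U)$ is a neighbourhood of $0$ in $A$ because $f(0)=0$, so it contains $N_0(A)$. We then set $N_0(f)$ to be the restriction of $f$, which is continuous for the subspace topologies. Functoriality is immediate, since restriction commutes with composition and identities.

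For (ii) and (iii), the key observation is that if $A$ has the trivial topology, then every pointed continuous map $\mathbf{g}\colon A\to B$ takes values in $N_0(B)$. Indeed, for a neighbourhood $U$ of $0$ in $B$, take an open $O$ with $0\in O\subseteq U$; then $\mathbf{g}^{-1}(O)$ is open, nonempty (it contains $0$), hence equal to $A$, so $\mathbf{g}(A)\subseteq U$. Intersecting over all $U$ gives $\mathbf{g}(A)\subseteq N_0(B)$. Corestriction to the subspace $N_0(B)$ preserves continuity, and composing with the inclusion $N_0(B)\hookrightarrow B$ gives the inverse. This yields the bijection in (iii), and it is a group isomorphism because the sums on both Hom-sets are computed pointwise in $B$. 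The same argument restricted to homomorphisms gives the first isomorphism in (ii). For the second, $N_0(B)$ has the trivial topology, so every map into it is continuous, and $\mathrm{Hom}_{\TA}(A,N_0(B))=\mathrm{Hom}_{\A}(A,N_0(B))$.

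The only point requiring care is the trivial-topology claim for $N_0(A)$, since it uses translation invariance of the topology rather than just the definition as an intersection; the rest is formal.
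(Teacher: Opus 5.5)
Your proposal is correct and takes the same route as the paper. The paper's proof consists only of the observation that a pointed continuous map out of a trivially topologized group pulls back every open neighbourhood of $0$ to the whole domain, leaving everything else as routine. You use that same observation and also fill in the omitted details correctly: the subgroup property, the trivial subspace topology via translation, and functoriality.
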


\begin{proof} Note that if $U$ is an open neighbourhood at $0$ in $B$ and $f \colon A \to B$ is a continuous map, then $A\subset f^{-1}(U)$. Using this fact,  the proof of (i), (ii),  and (iii) is  routine.
\end{proof}

\begin{remark}
The subgroup $N_0(G)$ can be defined for non-Abelian groups as well. It coincides with the closure of the trivial subgroup $\{e\}$, so it is a closed normal subgroup of $G$ (so, the smallest one).
\end{remark}

\begin{theorem} If $A$ is a topological Abelian group with the trivial topology, then $${E}_{\TA}^{ws}(A, B)\cong  {E}_{\A}(A, N_0(B)),$$
\end{theorem}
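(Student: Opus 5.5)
The plan is to use the cocycle description of Theorem~\ref{cocycles}, which gives $E_{\TA}^{ws}(A,B)\cong \mathcal{Z}_c(A,B)/\mathcal{B}_c(A,B)$, together with the algebraic analogue $E_{\A}(A,N_0(B))\cong \mathcal{Z}(A,N_0(B))/\mathcal{B}(A,N_0(B))$ (Proposition~\ref{diagram}(v)). It then suffices to show two identifications:
\[
\mathcal{Z}_c(A,B)=\mathcal{Z}(A,N_0(B)),\qquad \mathcal{B}_c(A,B)=\mathcal{B}(A,N_0(B)).
\]
Both sides are regarded as groups of maps into $B$, via the inclusion $N_0(B)\subseteq B$.

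\emph{First step: the topology of $A\times A$.} Since $A$ carries the trivial topology, $A\times A$ also has the trivial topology. Lemma~\ref{trivialtop}(iii) applies verbatim to any space with trivial topology, and its proof gives the following: a pointed continuous map $g\colon A\times A\to B$ takes values in every neighbourhood of $0$, because $g^{-1}(U)$ is a nonempty open set containing $(0,0)$ and hence is all of $A\times A$. So $g$ lands in $N_0(B)$.

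Conversely, every pointed map $A\times A\to N_0(B)$ is continuous as a map into $B$. Its image lies in $N_0(B)$, and any open set of $B$ meeting $N_0(B)$ contains all of $N_0(B)$. To see this, take $U$ open with $x\in U\cap N_0(B)$ and $y\in N_0(B)$. Then $y-x\in N_0(B)\subseteq U-x$, since $U-x$ is a neighbourhood of $0$, so $y\in U$. Hence preimages of open sets are either empty or everything. Thus $\mathrm{Hom}_{\pT}(A\times A,B)=\mathrm{Hom}_{\pS}(A\times A,N_0(B))$.

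\emph{Second step: cocycles and coboundaries.} The cocycle conditions \eqref{com} and \eqref{aso} are purely algebraic. Since $N_0(B)$ is a subgroup, they hold in $B$ if and only if they hold in $N_0(B)$, so the first step gives $\mathcal{Z}_c(A,B)=\mathcal{Z}(A,N_0(B))$.

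The same argument applied to $A$ instead of $A\times A$ (this is exactly Lemma~\ref{trivialtop}(iii)) gives $\mathrm{Hom}_{\pT}(A,B)=\mathrm{Hom}_{\pS}(A,N_0(B))$. The coboundary operator $D(h)(a,a')=h(a)+h(a')-h(a+a')$ is compatible with this identification. Therefore $\mathcal{B}_c(A,B)=D_c(\mathrm{Hom}_{\pT}(A,B))=D(\mathrm{Hom}_{\pS}(A,N_0(B)))=\mathcal{B}(A,N_0(B))$.

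\emph{Third step: conclusion.} Taking quotients and composing with the isomorphisms of Theorem~\ref{cocycles} and Proposition~\ref{diagram}(v) yields $E_{\TA}^{ws}(A,B)\cong E_{\A}(A,N_0(B))$ as Abelian groups. The group structures agree because both are induced by the addition in $B$, restricted to $N_0(B)$.

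Concretely, the isomorphism sends a $ws$-extension $B\to X\to A$ with section $\mathbf{s}$ to the algebraic extension $N_0(B)\to N_0(B)\times_{f}A\to A$, where $f$ is its cocycle. The main point needing care is the continuity argument in the first step, namely that open sets of $B$ are saturated with respect to $N_0(B)$. The rest is bookkeeping.
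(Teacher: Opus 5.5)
Your proposal is correct and follows essentially the same route as the paper. Like the paper, you identify pointed continuous maps out of the indiscrete spaces $A$ and $A\times A$ with pointed maps into $N_0(B)$, deduce $\mathcal{Z}_c(A,B)=\mathcal{Z}(A,N_0(B))$ and $\mathcal{B}_c(A,B)=\mathcal{B}(A,N_0(B))$, and conclude via Theorem~\ref{cocycles} and its algebraic analogue, with your explicit check that open sets of $B$ meeting $N_0(B)$ contain all of $N_0(B)$ filling in a step the paper leaves implicit in Lemma~\ref{trivialtop}.
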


\begin{proof}

As a consequence of Lemma \ref{trivialtop}, we have
$\mathrm{Hom}_{\pT}(A, B)
  \cong  \mathrm{Hom}_{\pT}(A, N_0(B)),$ and
  $\mathrm{Hom}_{\pT}(A \times A, B)
  \cong  \mathrm{Hom}_{\pT}(A \times A, N_0(B)).$

  This implies that
  $\mathcal{B}_c(A, B)
  \cong  \mathcal{B}(A, N_0(B))$ and
  $\mathcal{Z}_c(A, B)
  \cong  \mathcal{Z}(A, N_0(B)).$ Then,
  $ \mathcal{Z}_c(A,B)/\mathcal{B}_c(A,B) \cong  \mathcal{Z}(A,N_0(B))/\mathcal{B}(A,N_0(B))$. Now, we can apply Theorem \ref{cocycles} to obtain
$${E}_{\TA}^{ws}(A, B)\cong {E}_{\A}(A, N_0(B)).$$
\end{proof}
\bigskip

\begin{corollary} \label{trivial} Suppose that $A, B \in \TA$. If $A$ has the trivial topology and $B$ is Hausdorff,  then $$E_{\TA}^{ws}(A, B)\cong  0.$$
\end{corollary}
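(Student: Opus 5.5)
The plan is to reduce to the preceding theorem, which gives $E_{\TA}^{ws}(A,B)\cong E_{\A}(A,N_0(B))$ whenever $A$ carries the trivial topology. It then suffices to show that $N_0(B)$ is the trivial group when $B$ is Hausdorff, because $E_{\A}(A,0)=0$. Indeed, every extension $0\to X\to A$ of Abelian groups has $X\cong A$ via the quotient map, so all such extensions are equivalent to the split one. Equivalently, at the cocycle level, $\mathcal{Z}(A,0)=\{0\}$.

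The key step is the identification $N_0(B)=\{0\}$ for Hausdorff $B$. Take $x\neq 0$ in $B$. The Hausdorff property gives disjoint open sets $U\ni 0$ and $V\ni x$, so $x\notin U$. Hence $x\notin\bigcap\{U \mid U \text{ is a neighbourhood at } 0\}=N_0(B)$. In fact $T_1$ alone would suffice. This is consistent with the Remark that $N_0(B)$ is the closure of $\{0\}$, which is $\{0\}$ in a Hausdorff space.

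It is also worth recording a direct argument that bypasses $N_0$, as a check. By Lemma~\ref{trivialtop}(iii), any pointed continuous map $f\colon A\times A\to B$ takes values in $N_0(B)=\{0\}$, since $A\times A$ also carries the trivial topology. Thus $\mathcal{Z}_c(A,B)=0$, and Theorem~\ref{cocycles} gives $E_{\TA}^{ws}(A,B)\cong \mathcal{Z}_c(A,B)/\mathcal{B}_c(A,B)=0$.

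There is no real obstacle here. The only point needing care is that the product $A\times A$ of trivial spaces is again trivial, so that the previous theorem, or Lemma~\ref{trivialtop}(iii) applied to $A\times A$, is legitimately available.
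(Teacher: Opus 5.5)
Your proposal is correct and follows the paper's route. The paper states this corollary directly after the theorem $E_{\TA}^{ws}(A,B)\cong E_{\A}(A,N_0(B))$, and it follows exactly as you argue: $N_0(B)=\{0\}$ when $B$ is Hausdorff, and $E_{\A}(A,0)=0$. Your supplementary check, that $\mathcal{Z}_c(A,B)=0$ because pointed continuous maps from the indiscrete space $A\times A$ land in $N_0(B)=\{0\}$, is also valid.
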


\begin{example}\label{ej1} Consider the extension
$E=(\mathbb{Q}\to\mathbb{R}\to \mathbb{R}/\mathbb{Q}) \in \mathcal{E}_{TA}(\mathbb{R}/\mathbb{Q}, \mathbb{Q}) $
where $\mathbb{Q}, \mathbb{R}$ have the usual topology and   $\mathbb{R}/\mathbb{Q}$ has  the trivial topology. Then,  by Corollary \ref{trivial},
$$E_{\TA}^{ws}(\mathbb{R}/\mathbb{Q},\mathbb{Q})\cong 0.$$

By Theorem \ref{sixterms}, {we have} the long exact sequence
$$\xymatrix{0 \ar[r] &{\rm{Hom}}_{\mathbf{TA}}(\mathbb{R}/\mathbb{Q}, \mathbb{Q})\ar[r] &{\rm{Hom}}_{\mathbf{A}}(\mathbb{R}/\mathbb{Q},\mathbb{Q}) \ar[r] & \overline{{\rm{ex}}}_{\mathbf{TA}}^{0}(\mathbb{R}/\mathbb{Q},\mathbb{Q}) \ar[r] &}$$
      $$\xymatrix{ E_{\mathbf{TA}}^{ws}(\mathbb{R}/\mathbb{Q}, \mathbb{Q}) \ar[r]& E_{\mathbf{A}}(\mathbb{R}/\mathbb{Q}, \mathbb{Q}) \ar[r]& \overline{{\rm{ex}}}_{\mathbf{TA}}^{1}(\mathbb{R}/\mathbb{Q},\mathbb{Q}) \ar[r]& 0 .}$$

  Since $\mathbb{Q}$ is totally disconnected and $\mathbb{R}$ is connected, we have
 ${\rm{Hom}}_{\mathbf{TA}}(\mathbb{R}/\mathbb{Q}, \mathbb{Q})\cong 0$.
Taking into account that $\mathbb{Q}$ is divisible, by \cite[Theorem 2.6 in Chapter IV]{Fuchs2015} (see also \cite[page 267]{Fuchs2015}), we  have that $\mathbb{Q}$ is injective. This fact implies that
       $E_{\mathbf{A}}^{}(\mathbb{R}/\mathbb{Q}, \mathbb{Q}) \cong 0.$

Using the
exactness
of the  sequence above, we have
$$ \overline{{\rm{ex}}}_{\mathbf{TA}}^{0}(\mathbb{R}/\mathbb{Q},\mathbb{Q})  \cong{\rm{Hom}}_{\mathbf{A}}(\mathbb{R}/\mathbb{Q},\mathbb{Q}), $$
$$ \overline{{\rm{ex}}}_{\mathbf{TA}}^{1}(\mathbb{R}/\mathbb{Q},\mathbb{Q})  \cong 0.$$

Since $\mathbb{R}/\mathbb{Q}\cong \bigoplus_{\frak c}\, \mathbb{Q}=: \Q^{(\frak c)}$, we also  have that
$$\overline{{\rm{ex}}}_{\mathbf{TA}}^{0}(\mathbb{R}/\mathbb{Q},\mathbb{Q})\cong \prod_{\frak c} \mathbb{Q}=: \Q^\frak c.$$
\end{example}

\subsection{Discrete topologies and $ws$-extensions}

We examine in this subsection the Abelian group of $ws$-extensions under the conditions that either the quotient is discrete or the subspace is discrete.

Firstly, we study the case of weak-split extensions:
$$E=(\xymatrix{ B \ar[r]^j& X \ar[r]^p& D}),$$
where $D$ is  a discrete topological Abelian group.

Since any pointed section ${\boldsymbol{\sigma}}\colon D \to X$ is continuous, we have that the canonical map $E_{\TA}^{ws}(D,B) \to E_{\TA}(D,B)$ is surjective,  and it is easy to check that in this case   $E_{\TA}^{ws}(D,B) \cong E_{\TA}(D,B).$

 For the case of $ws$-extensions of $B$ by $D$ with $D$ discrete, we have the following  properties:\medskip

 \begin{proposition} Let $B, D$ be topological Abelian groups and suppose that $D$ is discrete. Then,

 \begin{itemize}
  \item[(i)]
  $\mathrm{Hom}_{\mathbf{TA}}(D,B) \cong \mathrm{Hom}_{\mathbf{A}}(D,B)$,
  \item[(ii)]    $E_{\mathbf{TA}}^{ws}(D,B)\cong E_{\mathbf{A}}(D,B),$
  \item[(iii)] $\overline{\mathrm{ex}}^0_{\mathbf{TA}}(D,B)\cong 0$,
    \item[(iv)] $\overline{\mathrm{ex}}^1_{\mathbf{TA}}(D,B)\cong 0.$
\end{itemize}
  \end{proposition}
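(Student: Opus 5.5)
The key observation is that when $D$ is discrete, every map out of $D$ is continuous. The same holds for maps out of $D\times D$, since the product of discrete spaces is discrete. Hence
\[
\mathrm{Hom}_{\pT}(D,B)=\mathrm{Hom}_{\pS}(D,B), \qquad \mathcal{Z}_c(D,B)=\mathcal{Z}(D,B), \qquad \mathcal{B}_c(D,B)=\mathcal{B}(D,B).
\]
Indeed, a pointed cocycle $f\colon D\times D\to B$ is automatically continuous, and a coboundary $D(h)$ may be written with $h$ a pointed map $D\to B$, which is automatically continuous. I will establish these three equalities first and then read off each item.

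For (i), every homomorphism $D\to B$ is a map out of a discrete space, hence continuous. So the inclusion $i_*^0\colon \mathrm{Hom}_{\TA}(D,B)\to\mathrm{Hom}_{\A}(D,B)$ is the identity, which one can equally phrase via $\mathrm{Ker}(D_c)=\mathrm{Ker}(D)$ in Proposition~\ref{diagram}. For (ii), I invoke Theorem~\ref{cocycles} together with its algebraic counterpart, Proposition~\ref{diagram}(v):
\[
E^{ws}_{\TA}(D,B)\cong \mathcal{Z}_c(D,B)/\mathcal{B}_c(D,B)=\mathcal{Z}(D,B)/\mathcal{B}(D,B)\cong E_{\A}(D,B).
\]
I must also check that this isomorphism is the map $i_*^1$ (forgetting the topology). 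This holds because both identifications send an extension to the class of the cocycle $f_E$ built from a section, and any algebraic section of $D$ is continuous.

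For (iii) and (iv), the middle vertical map $\bar D$ of the diagram in Proposition~\ref{diagram} goes from $\mathrm{Hom}_{\pS}(D,B)/\mathrm{Hom}_{\pT}(D,B)=0$ to $\mathcal{Z}(D,B)/\mathcal{Z}_c(D,B)=0$. Hence both $\mathrm{Ker}(\bar D)$ and $\mathrm{Coker}(\bar D)$ vanish. As a consistency check, the exact sequence of Theorem~\ref{sixterms} then gives (i) and (ii) again.

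I expect no real obstacle. The only point needing a little care is that $D\times D$ carries the product topology, which is discrete, so the continuity requirement in $\mathcal{Z}_c$ is vacuous. One should also note, for (ii), that the isomorphism is natural and not merely an abstract isomorphism.
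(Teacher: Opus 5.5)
Your proof is correct. For (i) and (ii) it follows the paper: discreteness of $D$ and of $D\times D$ gives $\mathcal{Z}_c(D,B)=\mathcal{Z}(D,B)$ and $\mathcal{B}_c(D,B)=\mathcal{B}(D,B)$, and then Theorem~\ref{cocycles} applies.

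Where you differ is (iii) and (iv).
\begin{itemize}
\item The paper derives them from the six-term sequence of Theorem~\ref{sixterms}. It argues that once the comparison maps are isomorphisms, $\overline{\mathrm{ex}}^0$ and $\overline{\mathrm{ex}}^1$ must vanish.
\item You read them off directly from the definition: $\bar D$ is a map between two zero groups, so its kernel and cokernel are zero.
\end{itemize}

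Your route is shorter and does not use exactness at all. The paper's deduction also silently needs more than an abstract isomorphism $E^{ws}_{\TA}(D,B)\cong E_{\A}(D,B)$. It needs the specific map $i_*^1$ to be bijective: injectivity of $i_*^1$ is what kills $\overline{\mathrm{ex}}^0$ via $\delta$, and surjectivity is what kills $\overline{\mathrm{ex}}^1$ via $p_*^1$. That is exactly the compatibility you took care to check, namely that both identifications send an extension to $[f_E]$.

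Your ordering also lets the logic run in reverse. Once $\overline{\mathrm{ex}}^0=\overline{\mathrm{ex}}^1=0$ is known directly, exactness of the six-term sequence forces $i_*^0$ and $i_*^1$ to be isomorphisms. This gives (i) and the natural form of (ii) with no separate verification. The paper's route mainly serves to show the six-term sequence in action.
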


  \begin{proof}

  Since $D$ is discrete, we have the following:

$\mathrm{Hom}_{\pT}(D, B)
  \cong  \mathrm{Hom}_{\pS}(D, B),$ \quad
  $\mathrm{Hom}_{\pT}(D \times D , B)
  \cong  \mathrm{Hom}_{\pS}(D \times D , B).$

  This implies that
  $\mathcal{B}_c(D, B)
  \cong  \mathcal{B}(D, B)$ and
  $\mathcal{Z}_c(D, B)
  \cong  \mathcal{Z}(D, B).$

  Then,
  $ \mathcal{Z}_c(A,B)/\mathcal{B}_c(D,B) \cong  \mathcal{Z}(D,B)/\mathcal{B}(D,B)$. Now, we can apply Theorem \ref{cocycles} to obtain
$${E}_{\TA}^{ws}(D, B)\cong {E}_{\A}(D, B).$$
By Theorem \ref{sixterms} the following sequence is exact:
   $$0 \to {\rm{Hom}}_{\mathbf{TA}}(D, B)\to {\rm{Hom}}_{\mathbf{A}}(D,B) \to \overline{{\rm{ex}}}_{\mathbf{TA}}^{0}(D,B) \to  E_{\mathbf{TA}}^{ws}(D, B) \to $$
      $$E_{\mathbf{A}}(D, B)\to \overline{{\rm{ex}}}_{\mathbf{TA}}^{1}(D,B)\to 0. $$

 Since $ {\rm{Hom}}_{\mathbf{TA}}(D, B)\to {\rm{Hom}}_{\mathbf{A}}(D,B)$ is also an isomorphism, we obtain
$\overline{\mathrm{ex}}^0_{\mathbf{TA}}(D,B)\cong 0$, and
 $\overline{\mathrm{ex}}^1_{\mathbf{TA}}(D,B)\cong 0.$
\end{proof}

  Next, we consider the case when the subgroup of the $ws$-extesnsion is discrete, although beforehand we introduce the concept of a covering extension.

  \begin{definition} An extension  of  topological Abelian groups   $\xymatrix{B\ar[r]^{j} & X \ar[r]^{p} & A }$ is said to be a \emph{covering extension} if there is an open $U$ in $A$ at $0$, and disjoint open subsets $U_b$ in $X$, $b\in B$ such that $j(b) \in U_b$, $U_{b_1} \cap U_{b_2}=\emptyset$, for $b_1 \not = b_2$,  $p^{-1} (U) = \bigcup_{b \in B}  U_b$, and $p|_{U_b} \colon U_b \to U$ is a homeomorphism.
 If there is an open neighbourhood $U$ in $A$ with these properties,  it is said that  $p$ is \emph{trivial over} $U$.
  \end{definition}

  \begin{example}\label{dos} The standard extension $\xymatrix{\mathbb{Z} \ar[r]^{j} & \mathbb{R}  \ar[r]^{p} & \mathbb{R}/ \mathbb{Z}  }$ is a covering extension (note that $\mathbb{R}/ \mathbb{Z}\cong \mathbb{T} $). For each $n \in \mathbb{N}$, we have the extension
 $$\xymatrix{\mathbb{Z}/n \mathbb{Z} \ar[r]^{j_n} & \mathbb{R}/ \mathbb{Z}  \ar[r]^{p_n} & \mathbb{R}/ \mathbb{Z},  }$$ where $p_n(r+\mathbb{Z})= nr +\mathbb{Z}$, is also a covering extension.  In these cases, for any $y \in \mathbb{R}/ \mathbb{Z}$, and $y\not = 0$, the open neighbouhood $U= \mathbb{R}/ \mathbb{Z}\setminus \{y\}$ verifies that $p$ and $p_n$, and $n \in \mathbb{N}$  are trivial over $U$.
 Note that if $a$ is an irrational real number and  $y=a+\mathbb{Z}$, then $\mathbb{Q}/ \mathbb{Z}\subset U$.
 \end{example}

 \begin{remark} We note that if the extension  $\xymatrix{B\ar[r]^{j} & X \ar[r]^{p} & A }$ is a covering extension, then $B$ has the  discrete topology, and if $p$ is trivial over an open neighbourhood  $U$ at $0$, then we can take  the
  open covering $A=\bigcup_{a\in A} a+U$, and
we have that $p$ is also trivial over $a+U$.
Conversely, if $B$ carries the discrete topology, then there exists an open set
$U \subseteq X$ such that $U \cap j(B) = \{0\}$, the restriction of $p$ to $U$ is injective,
and $p(U)$ is open. It follows that $p$ is trivial on $U$, and hence the extension is a
covering~extension.
 \end{remark}

 \begin{proposition}\label{csplit} Suppose the extension   $\xymatrix{B\ar[r]^{j} & X \ar[r]^{p} & H }$ is an extension covering that  is trivial on an open neighbourhood $U$ at $0$, and let $A$ be  a subgroup of $H$ such that $A \subset U$. Then $\xymatrix{B\ar[r]^{j} & p^{-1}(A) \ar[r]^-{p|_{p^{-1}(A)}} & A }$ is a weak-split extension.
  \end{proposition}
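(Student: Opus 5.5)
The plan is to build the continuous section directly from the local triviality of $p$ over $U$. Since $p$ is trivial over $U$, we have $p^{-1}(U)=\bigcup_{b\in B}U_b$ with $0=j(0)\in U_0$, and $p|_{U_0}\colon U_0\to U$ is a homeomorphism. Let $\sigma\colon U\to U_0$ be its inverse. This $\sigma$ is continuous, satisfies $p\sigma=\id_U$, and $\sigma(0)=0$ because $0\in U_0$ and $p(0)=0$. Since $A\subset U$, the restriction $\mathbf{s}=\sigma|_A\colon A\to U_0\subset p^{-1}(A)$ is defined. It is continuous for the subspace topologies and is a pointed section of $p|_{p^{-1}(A)}$.

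Next I check that $B\to p^{-1}(A)\to A$ is an extension in the sense of Definition~\ref{ws-ext}, with $A$ carrying the subspace topology from $H$ and $p^{-1}(A)$ the subspace topology from $X$.
\begin{itemize}
\item $p^{-1}(A)$ is a subgroup of $X$ containing $j(B)=\ker p$, so $j$ corestricts to $p^{-1}(A)$. It is still a topological embedding onto the kernel, hence relatively open.
\item $p|_{p^{-1}(A)}$ is a surjective continuous homomorphism onto $A$.
\item For openness: if $O\subseteq X$ is open, then $p(O\cap p^{-1}(A))=p(O)\cap A$, which is open in $A$ because $p$ is open. So the restricted map is a quotient (open) map, and its kernel is $j(B)$.
\end{itemize}
This verification is routine.

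Combining the two steps with Definition~\ref{wses}, the sequence is a $ws$-extension with continuous split $\mathbf{s}$. Equivalently, by Lemma~\ref{sectionretraction} and Remark~\ref{homeomorphism}, $\boldsymbol{\phi}(b,a)=j(b)+\mathbf{s}(a)$ gives a homeomorphism $B\times A\cong p^{-1}(A)$.

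The only point needing care is the definition of $\sigma$. One must use the sheet $U_0$ containing $0$, not an arbitrary sheet, so that $\mathbf{s}$ is pointed. One must also check that continuity of $\sigma$ on $U$ passes to $A$ with its subspace topology, which is immediate. Notably, the subgroup hypothesis on $A$ is not needed for the section itself. It is used only to make $p^{-1}(A)\to A$ a homomorphism of topological groups, so that we have an extension at all.
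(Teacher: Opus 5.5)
Your proposal is correct and is the paper's argument: the section is $(p|_{U_0})^{-1}$ restricted to $A$, and you also spell out the routine check, via $p(O\cap p^{-1}(A))=p(O)\cap A$, that the restricted sequence is still an extension, which the paper leaves implicit. One small slip: $U_0$ is not contained in $p^{-1}(A)$ in general, but what you need does hold, namely $\sigma(A)\subseteq U_0\cap p^{-1}(A)$, since $p\sigma(a)=a\in A$.
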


  \begin{proof} Since $\theta_0=p|_{U_0} \colon U_0 \to U$ is a homeomorphism, then ${\bf{s}}= \theta_0^{-1}|_A \colon A \to p^{-1}(A)$ is a pointed section of $p|_{p^{-1}(A)}$.
  \end{proof}

 \begin{corollary}\label{Cor2} The extensions $$E=(\xymatrix{\mathbb{Z} \ar[r]^{j} & \mathbb{Q}  \ar[r]^{p} & \mathbb{Q}/ \mathbb{Z} }), \quad E_n=(\xymatrix{\mathbb{Z}/n \mathbb{Z} \ar[r]^{j_n} & \mathbb{Q}/ \mathbb{Z}  \ar[r]^{p_n} & \mathbb{Q}/ \mathbb{Z} }),$$
 are weak-split extensions. Moreover, $ \mathbb{Q} $ is not topologically isomorphic to  $ \mathbb{Z} \times \mathbb{Q}/ \mathbb{Z} $ in $\TA$ and similarly,  $ \mathbb{Q}/ \mathbb{Z}$  is not topologically isomorphic to  $\mathbb{Z}/n \mathbb{Z}\times \mathbb{Q}/ \mathbb{Z} $ in $\TA$ (note that $ \mathbb{Q} $ is homeomorphic to  $ \mathbb{Z} \times \mathbb{Q}/ \mathbb{Z} $ in $\pT$ and similarly,  $ \mathbb{Q}/ \mathbb{Z}$  is homeomorphic  to  $\mathbb{Z}/n \mathbb{Z}\times \mathbb{Q}/ \mathbb{Z} $ in $\pT$).
  \end{corollary}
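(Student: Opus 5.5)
The plan has two parts: first obtain the weak-split property from Proposition~\ref{csplit}, and then rule out a topological isomorphism with the product by looking at algebraic invariants.

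\emph{Step 1: $E$ and $E_n$ are weak-split.} By Example~\ref{dos}, the extensions $\mathbb{Z}\to\mathbb{R}\to\mathbb{R}/\mathbb{Z}$ and $\mathbb{Z}/n\mathbb{Z}\to\mathbb{R}/\mathbb{Z}\xrightarrow{p_n}\mathbb{R}/\mathbb{Z}$ are covering extensions. Both are trivial over $U=\mathbb{R}/\mathbb{Z}\setminus\{a+\mathbb{Z}\}$ with $a$ irrational, and $\mathbb{Q}/\mathbb{Z}\subset U$. I will apply Proposition~\ref{csplit} with $H=\mathbb{R}/\mathbb{Z}$ and $A=\mathbb{Q}/\mathbb{Z}$. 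This gives weak-split extensions $B\to p^{-1}(\mathbb{Q}/\mathbb{Z})\to\mathbb{Q}/\mathbb{Z}$.

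It remains to identify $p^{-1}(\mathbb{Q}/\mathbb{Z})$. For $p$ it is $\mathbb{Q}\subset\mathbb{R}$ with the subspace topology, which is the usual topology. For $p_n$ it is $\{r+\mathbb{Z}: nr\in\mathbb{Q}\}=\mathbb{Q}/\mathbb{Z}$. I also need the restricted maps to be extensions, i.e.\ quotient maps onto $\mathbb{Q}/\mathbb{Z}$ with the subspace topology. This should follow from the local homeomorphism property: the restriction of $p$ to $U_0\cap p^{-1}(A)$ is a homeomorphism onto $U\cap A$, so the restricted map is open. The kernel is unchanged. Hence $E$ and $E_n$ are ws-extensions, and Lemma~\ref{sectionretraction}(iii) gives the homeomorphisms $\mathbb{Q}\cong\mathbb{Z}\times\mathbb{Q}/\mathbb{Z}$ and $\mathbb{Q}/\mathbb{Z}\cong\mathbb{Z}/n\mathbb{Z}\times\mathbb{Q}/\mathbb{Z}$ in $\pT$.

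\emph{Step 2: no topological isomorphism with the products.} It suffices to show there is no algebraic isomorphism. $\mathbb{Q}$ is torsion-free, while $\mathbb{Z}\times\mathbb{Q}/\mathbb{Z}$ has nonzero torsion elements $(0,1/2+\mathbb{Z})$, so $\mathbb{Q}\not\cong\mathbb{Z}\times\mathbb{Q}/\mathbb{Z}$ even in $\A$. For $n\ge2$, $\mathbb{Q}/\mathbb{Z}$ is divisible, but $\mathbb{Z}/n\mathbb{Z}\times\mathbb{Q}/\mathbb{Z}$ is not, since $(1,0)$ is not divisible by $n$. (Alternatively, $\mathbb{Q}/\mathbb{Z}$ has exactly $n$ elements of order dividing $n$, while the product has $n^2$.) For $n=1$ the statement is trivial and the claim should be read for $n\ge2$.

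\emph{Main obstacle.} The only delicate point is checking in Step 1 that restricting to $p^{-1}(A)$ preserves the extension property, namely relative openness of the projection. I will handle it using the sheets $U_b$, and translate by elements of $A$ as in the remark following Example~\ref{dos}.
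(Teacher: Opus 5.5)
Your proposal is correct and follows the paper's proof: weak-splitness comes from Proposition~\ref{csplit} together with Example~\ref{dos}, and the non-isomorphisms come from algebraic invariants (torsion for $\mathbb{Q}$; for $\mathbb{Q}/\mathbb{Z}$ your divisibility argument works, as does the $n$-torsion count the paper uses). Your extra check that the restricted projection is still open is valid and can be shortened to $p(V\cap p^{-1}(A))=p(V)\cap A$; your caveat that the second non-isomorphism needs $n\ge 2$ is well taken, since for $n=1$ the product $\mathbb{Z}/1\mathbb{Z}\times\mathbb{Q}/\mathbb{Z}$ is topologically isomorphic to $\mathbb{Q}/\mathbb{Z}$ and the claim fails.
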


   \begin{proof} From Proposition \ref{csplit} and Example \ref{dos}, it follows that $E$ and $E_n$ are $ws$-extensions.

    Note  $ \mathbb{Q} $ does not have  nonzero torsion elements, and $\mathbb{Z} \times \mathbb{Q}/ \mathbb{Z} $ has nonzero torsion elements. This implies that $ \mathbb{Q} $ is not algebraically isomorphic to $\mathbb{Z} \times \mathbb{Q}/ \mathbb{Z} $.

    Similarly, the subgroup of $n$-torsion elements of $\mathbb{Q}/ \mathbb{Z}$ is isomorphic to  $\mathbb{Z}/n \mathbb{Z} $, but the subgroup of $n$ -torsion elements of $\mathbb{Z}/n \mathbb{Z} \times \mathbb{Q}/ \mathbb{Z}$ is isomorphic to $\mathbb{Z}/n \mathbb{Z}  \times \mathbb{Z}/n \mathbb{Z} $. Therefore, $ \mathbb{Q}/ \mathbb{Z}$  is not algebraically isomorphic to   $\mathbb{Z}/n \mathbb{Z}\times \mathbb{Q}/ \mathbb{Z} $ in $\pT$
  \end{proof}

  A consequence of the above results is the existence of topological Abelian groups $A, B$ such that $E_{\TA}^{ws}(A,B)$ is not a trivial Abelian group.\medskip

  \begin{corollary} The topological Abelian groups $\mathbb{Q}/ \mathbb{Z}, \, \mathbb{Z},$ and  $\mathbb{Z}/n \mathbb{Z} $ with the usual topologies verify
   $$E_{\TA}^{ws}(\mathbb{Q}/ \mathbb{Z} , \mathbb{Z})\not \cong 0,  \quad E_{\TA}^{ws}(\mathbb{Q}/ \mathbb{Z} , \mathbb{Z}/n \mathbb{Z} )\not \cong 0.$$
   \end{corollary}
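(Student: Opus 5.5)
The plan is to show that the two $ws$-extensions $E$ and $E_n$ of Corollary~\ref{Cor2} are not equivalent to the split extensions. That would give nonzero classes in $E_{\TA}^{ws}(\mathbb{Q}/\mathbb{Z},\mathbb{Z})$ and $E_{\TA}^{ws}(\mathbb{Q}/\mathbb{Z},\mathbb{Z}/n\mathbb{Z})$, respectively. Recall that the zero element of $E_{\TA}^{ws}(A,B)$ under the Baer sum is the class of the split extension $B \xrightarrow{i_1} B\times A \xrightarrow{p_2} A$. So it suffices to prove $[E]\neq 0$ and $[E_n]\neq 0$.

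Suppose $[E]=0$. Then, by the definition of $\approx$ and Lemma~\ref{isom}(ii), there is a continuous homomorphism $\theta\colon \mathbb{Z}\times\mathbb{Q}/\mathbb{Z}\to\mathbb{Q}$ compatible with the inclusions and projections, and it is a topological isomorphism. In particular $\mathbb{Q}$ would be algebraically isomorphic to $\mathbb{Z}\times \mathbb{Q}/\mathbb{Z}$. This contradicts the torsion argument in the proof of Corollary~\ref{Cor2}: $\mathbb{Q}$ is torsion-free, while $\mathbb{Z}\times\mathbb{Q}/\mathbb{Z}$ is not. Hence $[E]\neq 0$.

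For $E_n$ the same reasoning applies. Equivalence with the split extension would force $\mathbb{Q}/\mathbb{Z}\cong \mathbb{Z}/n\mathbb{Z}\times\mathbb{Q}/\mathbb{Z}$ as Abelian groups. But the $n$-torsion subgroups have orders $n$ and $n^2$ respectively, so they differ for $n\ge 2$. (For $n=1$ the group $\mathbb{Z}/n\mathbb{Z}$ is trivial and the statement degenerates, so I take $n\ge 2$ as implicit.)

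An equivalent route goes through the forgetful map $i_*^1\colon E_{\TA}^{ws}(A,B)\to E_{\A}(A,B)$ of Theorem~\ref{sixterms}. It sends $[E]$ to the algebraic class of $\mathbb{Z}\to\mathbb{Q}\to\mathbb{Q}/\mathbb{Z}$, which is nonzero because the sequence does not split algebraically. Since $i_*^1$ is a homomorphism, $[E]\neq 0$, and likewise for $E_n$.

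The only point needing care is that the zero of the Baer sum is the split extension class and that equivalence implies an algebraic isomorphism of middle groups. Both are standard and already available from Lemma~\ref{isom} and the group structure established above, so I expect no real obstacle.
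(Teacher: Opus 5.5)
Your proposal is correct and follows the paper's route: the corollary is deduced from Corollary~\ref{Cor2}, where $E$ and $E_n$ are shown to be $ws$-extensions whose middle groups are not even algebraically isomorphic to the split products, by the same torsion count you use. Your caveat that $n\ge 2$ must be assumed is right and is left implicit in the paper: for $n=1$ the group $E_{\TA}^{ws}(\mathbb{Q}/\mathbb{Z},0)$ is trivial.
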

 \medskip

In fact, the results above also hold for subgroups of the additive group of rational numbers $\Q$ that contains the group of integers $\Z$.
\medskip

\begin{corollary}\label{Q1}
Let $G$ be a subgroup of the additive group of rational numbers $\Q$ that contains the group of integers $\Z$. Then, the canonical exact sequence
$$E=(\xymatrix{\mathbb{Z} \ar[r]^{j} & G  \ar[r]^{p} & G/ \mathbb{Z}})$$ is a $ws$-extension when the group $G$ is equipped with
the Euclidean topology.
\end{corollary}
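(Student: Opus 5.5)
The plan is to reduce to Proposition~\ref{csplit}: first realize the extension inside a covering extension, then choose a neighbourhood over which that covering is trivial and which contains the relevant quotient. Since $\Z\subseteq G\subseteq \Q$, the extension $E$ embeds in the standard covering extension $\Z \to \R \to \R/\Z$ of Example~\ref{dos}. Give $G/\Z$ the quotient topology of $G$; because $\Z$ is open-ish in the relevant sense, this coincides with the subspace topology from $\R/\Z\cong \T$. To check this, note that $p\colon \R\to\R/\Z$ is open, so the restriction $G\to p(G)=G/\Z$ is again open: for $V$ open in $\R$, we have $p(V\cap G)=p(V)\cap p(G)$ because $G+\Z=G$. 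Thus $E$ is precisely the extension $\Z \to p^{-1}(G/\Z) \to G/\Z$ obtained by restricting the covering over the subgroup $G/\Z$ of $\R/\Z$, using $p^{-1}(G/\Z)=G+\Z=G$.

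Next, choose the trivializing neighbourhood. Take an irrational $a$ and put $y=a+\Z$ and $U=\R/\Z\setminus\{y\}$. Example~\ref{dos} shows that $p$ is trivial over $U$, and $U$ contains $\Q/\Z$, hence contains $G/\Z$. Proposition~\ref{csplit} then gives a continuous pointed section $\mathbf{s}=\theta_0^{-1}|_{G/\Z}$, where $\theta_0\colon U_0\to U$ is the restriction of $p$ to the sheet through $0$. Concretely, $U_0=(a'-1,a')$ for a suitable irrational representative $a'$ with $0\in(a'-1,a')$.

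One remaining point is that $\mathbf{s}$ takes values in $G$, and this holds since $p^{-1}(G/\Z)=G$. The step that needs the most care is the identification of topologies in the first paragraph, namely that the quotient topology on $G/\Z$ agrees with the subspace topology in $\T$, together with verifying that $E$ is genuinely an extension in $\TA$ (that $p$ is open and $\Z$ has the subspace topology, i.e. the discrete one). Both follow from openness of $\R\to\R/\Z$ and the saturation identity $G+\Z=G$.
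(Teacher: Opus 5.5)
Your proof is correct and is essentially the paper's argument. The paper restricts the section $\mathbf{s}_{(\Q/\Z)}$ of Corollary~\ref{Cor2}, which is $\theta_0^{-1}|_{\Q/\Z}$ from Proposition~\ref{csplit}, to $G/\Z$ and uses $G+\Z=G$ to see that it lands in $G$; this is exactly your $\theta_0^{-1}|_{G/\Z}$. Your version avoids the paper's unneeded finitely generated/dense case split, and it spells out the saturation argument identifying the quotient topology on $G/\Z$ with the subspace topology from $\R/\Z$, which the paper passes over when it calls continuity straightforward.
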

\begin{proof}
  The proof is trivial in the  case where $G$ is finitely generated. Indeed, in such a case the quotient group $G/\Z$ is finite. Therefore, we assume
 without loss of generality that $G$ cannot be finitely generated and, as a consequence, that $G$ is dense in $\Q$.
 Define the pointed section ${\bf s}_{(G/\Z)}\colon G/\Z\to G$ as the restriction mapping ${\bf s}_{(G/\Z)}={\bf s}_{(\Q/\Z)\, |(G/\Z)}$,
  where ${{\bf s}}_{(\Q/\Z)}$ is the continuous pointed section whose existence is proven in Corollary \ref{Cor2}.
  We claim that ${\bf s}_{(G/\Z)}$ is well defined. Indeed, take $g\in G$ and let $q={\bf s}_{(G/\Z)}(g+\Z)$.
  Since ${\bf s}_{(G/\Z)}$ is the restriction of ${\bf s}_{(\Q/\Z)}$ to $G/\Z$, it follows that
  $q-g\in \Z$. Therefore,  $g\in q+\Z\subseteq G$. The continuity of ${\bf s}_{(G/\Z)}$ is now a straightforward
  consequence of the continuity of ${\bf s}_{(\Q/\Z)}$.
\end{proof}

\subsection{Bohr topology and $ws$-extensions}\label{Bohrtopology}

  Given an  Abelian group $G$, we denote by $G^\sharp$ the algebraic group $G$ equipped with the initial topology with respect to  the family of  maps $\mathrm{Hom}_{\A}(G, \mathbb{T})$,  which is called the Bohr topology on $G$. Remark
  that $\mathrm{Hom}_{\A}(G^\sharp, \mathbb{T})\cong Hom_{\mathbf{A}}(G, \mathbb{T})$. For more details and properties on the Bohr topology, we refer the reader to
  \cite{van1990maximal}.

    It is interesting to note that we can consider the following extension of groups in   $\A$:
  $\xymatrix{\mathbb{Z} \ar[r]^{j} & \mathbb{Q}  \ar[r]^{p} & \mathbb{Q}/\mathbb{Z}}.$

  Then, we have the induced extensions
$$(\xymatrix{\mathbb{Z}^\sharp \ar[r]^{i^\sharp} & \mathbb{Q}^\sharp \ar[r]^{p^\sharp} & (\mathbb{Q}/ \mathbb{Z})^\sharp} ),$$
  $$(\xymatrix{\mathbb{Z}^\sharp \ar[r]^{i^\sharp} & \mathbb{Q}^\sharp  \ar[r]^{q} & \mathbb{Q}^\sharp/ \mathbb{Z} ^\sharp}).$$

Note that as consequence of \cite[Remark 10 (b)]{COMFORT2001215},  $ (\mathbb{Q}/ \mathbb{Z})^\sharp\cong  \mathbb{Q}^\sharp/ \mathbb{Z} ^\sharp$.
Moreover, as a consequence of \cite[Theorem 24]{COMFORT2001215}, these short exact sequences split weakly. This implies that the topological space  $\mathbb{Q}^\sharp $ is homeomorphic to $\mathbb{Z}^\sharp \times (\mathbb{Q}/ \mathbb{Z})^\sharp $.  Since $\mathbb{Q}^\sharp $ does not have torsion, it follows that the topological Abelian group   $\mathbb{Q}^\sharp $ is not algebraically isomorphic to  $\mathbb{Z}^\sharp \times (\mathbb{Q}/ \mathbb{Z})^\sharp $. Therefore, $\mathbb{Q}^\sharp $ is not isomorphic to $\mathbb{Z}^\sharp \times (\mathbb{Q}/ \mathbb{Z})^\sharp $ in the category $\TA$.

These arguments  prove that
$$E_{\TA}^{ws}((\mathbb{Q}/ \mathbb{Z})^\sharp , \mathbb{Z}^\sharp)\not \cong 0.$$

Moreover, in this case we  can give the following  computation of  $E_{\TA}^{ws}((\mathbb{Q}/ \mathbb{Z})^\sharp , \mathbb{Z}^\sharp)$.\medskip

\begin{example}\label{Ext^ws}
The group $E_{\TA}^{ws}((\mathbb{Q}/ \mathbb{Z})^\sharp , \mathbb{Z}^\sharp)$ is isomorphic to $Hom(\Q/\Z, \Q/\Z)$.
\end{example}
\begin{proof}
From the long exact sequence associtated to $0 \to \mathbb{Z} \to \mathbb{Q} \to \mathbb{Q}/\mathbb{Z} \to 0$ and taking into account that $\mathbb{Q}$ is a divisible free-torsion group and $ \mathbb{Q}/\mathbb{Z}$ is a torsion group, one has the isomorphism
 $$\text{Hom}_{\A}(\Q/\Z, \Q/\Z) \cong \text{Ext}_{{\bf A}}(\mathbb{Q}/ \mathbb{Z}, \mathbb{Z}) \cong \text{E}_{{\bf A}}(\mathbb{Q}/ \mathbb{Z}, \mathbb{Z}).$$
Furthermore, this isomorphism is realized as follows (see \cite[Chapter~9, Theorem~3.5]{Fuchs2015} for the details of this~construction):

Denoting by $\pi\colon\Q\to\Q/\Z$ the canonical quotient map, each
\[
f \in \operatorname{Hom}(\mathbb{Q}/\mathbb{Z}, \mathbb{Q}/\mathbb{Z})
\]
has  the associated extension
\[
0 \longrightarrow \mathbb{Z}
\overset{\iota}{\longrightarrow} E_f
\overset{p}{\longrightarrow} \mathbb{Q}/\mathbb{Z}
\longrightarrow 0,
\]
with
\[
E_f
=
\left\{
(q,x)\in \mathbb{Q}\times(\mathbb{Q}/\mathbb{Z})
\;\middle|\;
\pi(q)=f(x)
\right\},
\]

\medskip

\noindent such that the map
\[
\Phi :
\operatorname{Hom}(\mathbb{Q}/\mathbb{Z}, \mathbb{Q}/\mathbb{Z})
\longrightarrow
\operatorname{Ext_{{\bf A}}}(\mathbb{Q}/\mathbb{Z}, \mathbb{Z}),
\qquad
f \longmapsto [E_f],
\]
is a group isomorphism.

Let ${{\bf s}}_{(\Q/\Z)}\colon (\Q/\Z)^\sharp\to \Q^\sharp$ be the continuous pointed section whose existence is proven in Corollary~\ref{Cor2}.
We define the pointed section ${\bf s}_{E_f}\colon (\Q/\Z)^\sharp\to E_f^\sharp$ by ${\bf s}_{E_f}(x):=(({{\bf s}}_{(\Q/\Z)}\circ f)(x),x)$.
Note  that $(({{\bf s}}_{(\Q/\Z)}\circ f)(x),x)\in E_f$ since $\pi({{\bf s}}_{(\Q/\Z)}(f(x)))=(\pi\circ {{\bf s}}_{(\Q/\Z)})(f(x))=f(x)$. On the other hand
both maps $f$, being a group homomorphism, and ${{\bf s}}_{(\Q/\Z)}$ are $\sharp$-continuous, which implies the $\sharp$-continuity of ${\bf s}_{E_f}$.
This means that the sequence
\[
0 \longrightarrow \mathbb{Z}^\sharp
\overset{\iota}{\longrightarrow} E_f^\sharp
\overset{p}{\longrightarrow} (\mathbb{Q}/\mathbb{Z})^\sharp
\longrightarrow 0
\]
$ws$-splits, which completes the proof since $f$ was arbitrarily chosen in  $$Hom(\Q/\Z, \Q/\Z).$$

We note that an explicit computation of the group $Hom(\Q/\Z, \Q/\Z)$ can be done using Pontryagin duality; see \cite{Fuchs2015} for further information.
\end{proof}

Next, we extend these facts to a wider setting by applying some of the results obtained previously.
We remark that most results in this section can also be derived from \cite{COMFORT2001215}.
First, we need some preliminary~results.
\medskip

 \begin{proposition}\label{prducts}
Let  $$\{ E_i=(\xymatrix{B_i \ar[r]^{j_i} & X_i  \ar[r]^{p_i} & A_i}) \}_{i\in I}$$ be a family of  $ws$-extensions in {\bf TA}. Then,  the sequence
$$E=(\xymatrix{\prod\limits_{i\in I} B_i \ar[r]^{j} & \prod\limits_{i\in I}X_i  \ar[r]^{p} & \prod\limits_{i\in I}A_i}),$$ where $j((b_i)):=(j_i(b_i))$ and $p((x_i)):=(p_i(x_i))$
is a $ws$-extension.
\end{proposition}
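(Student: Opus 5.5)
The plan is to check the two defining conditions of a $ws$-extension for the product sequence $E$. First we show that $E$ is an extension in $\TA$ in the sense of Definition~\ref{ws-ext}: $j$ is a relatively open injective continuous homomorphism onto $\mathrm{Ker}(p)$, and $p$ is a quotient map. Then we exhibit a continuous pointed section of $p$. Algebraically everything is coordinatewise: $j$ and $p$ are continuous homomorphisms because each coordinate is, $p$ is surjective, and
\[
\mathrm{Ker}(p)=\prod_{i\in I}\mathrm{Ker}(p_i)=\prod_{i\in I} j_i(B_i)=j\Bigl(\prod_{i\in I}B_i\Bigr).
\]

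For the section, choose for each $i$ a continuous pointed section $\mathbf{s}_i\colon A_i\to X_i$ with $p_i\mathbf{s}_i=\id_{A_i}$. Define $\mathbf{s}((a_i))=(\mathbf{s}_i(a_i))$. This map is continuous into the product topology, since each coordinate $\mathbf{s}_i\circ \mathrm{pr}_i$ is continuous. It is pointed and satisfies $p\mathbf{s}=\id$. Once $E$ is known to be an extension, Definition~\ref{wses} gives the conclusion. Choosing the sections requires the axiom of choice, as usual.

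The main obstacle is the topological part of being an extension: $p$ must be open (a quotient) and $j$ must be relatively open. The cleanest route avoids checking openness directly and uses Lemma~\ref{sectionretraction}(iii) with Remark~\ref{homeomorphism}. For each $i$ there is a pointed homeomorphism $\boldsymbol{\phi}_i\colon B_i\times A_i\to X_i$ with $\boldsymbol{\phi}_i i_1=j_i$ and $p_i\boldsymbol{\phi}_i=p_2$. The product $\prod_{i\in I}\boldsymbol{\phi}_i$, precomposed with the canonical homeomorphism $\prod_{i\in I} B_i\times\prod_{i\in I} A_i\cong\prod_{i\in I}(B_i\times A_i)$, gives a homeomorphism $\boldsymbol{\phi}\colon \prod_{i\in I} B_i\times\prod_{i\in I} A_i\to\prod_{i\in I} X_i$, because products of homeomorphisms are homeomorphisms. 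Under $\boldsymbol{\phi}$, the map $j$ corresponds to $i_1$, which is a topological embedding, so $j$ is relatively open. The map $p$ corresponds to $p_2$, an open surjection, so $p$ is open. Alternatively, $p$ is open because it maps basic open boxes $\prod_{i\in I} U_i$ (with $U_i=X_i$ for all but finitely many $i$) to $\prod_{i\in I} p_i(U_i)$, which is a basic open box since each $p_i$ is open and surjective. Relative openness of $j$ also follows from $j$ being a product of embeddings. With both conditions verified, $E$ is an extension admitting the continuous pointed section $\mathbf{s}$, hence a $ws$-extension.
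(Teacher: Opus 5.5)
Your proposal is correct and follows the paper's proof, which consists of the observation that the coordinatewise map $\mathbf{s}((a_i))=(\mathbf{s}_i(a_i))$ is a continuous pointed section of $p$. You additionally verify that the product sequence is an extension in $\mathbf{TA}$ (identifying the kernel, showing $p$ is open and $j$ is an embedding, via the product homeomorphism $\prod_{i\in I}\boldsymbol{\phi}_i$). The paper's one-line argument takes that step for granted.
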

\begin{proof}
  It suffices to notice that if ${\bf s}_i\colon A_i\to X_i$ is a continuous pointed section, then  the map
  ${\bf s}((a_i)):=({\bf s}_i(a_i))$ defines a continuous pointed section of
  $\prod\limits_{i\in I} A_i$ into $\prod\limits_{i\in I} X_i$.
\end{proof}

\begin{example} The extension
 $\xymatrix{\mathbb{Z}/ 2\mathbb{Z} \ar[r]^{\alpha} &\mathbb{Z}/ 4 \mathbb{Z} \ar[r]^{\beta} &\mathbb{Z}/ 2\mathbb{Z}}$ (in $\A$),
  where $\alpha(1+ 2\mathbb{Z})=2+4 \mathbb{Z} $, $\beta (1+4 \mathbb{Z} )= 1+2 \mathbb{Z}$
   has a pointed section ${\bf{s}}(1+2 \mathbb{Z} )= 1+4 \mathbb{Z}$ and ${\bf{s}}(0+4 \mathbb{Z} )= 0+2 \mathbb{Z}$.

 This implies that the induced extension
 $$\xymatrix{\prod_{\mathbb{N}}\mathbb{Z}/ 2\mathbb{Z} \ar[r]^{ \prod_{\mathbb{N}}\alpha} & \prod_{\mathbb{N}}\mathbb{Z}/  4\mathbb{Z} \ar[r]^{ \prod_{\mathbb{N}}\beta} & \prod_{\mathbb{N}}\mathbb{Z}/ 2\mathbb{Z}  }$$
also has  a continuous pointed section, and it is therefore a $ws$-extension.
 \end{example}

The arguments drawn on to prove Corollary \ref{Q1} also
apply for the Bohr topology.

\begin{proposition}\label{Q2}
Let $G$ be a subgroup of the additive group of rational numbers $\Q$ that contains the integers $\Z$. Then, the canonical exact sequence
$$E=(\xymatrix{\mathbb{Z}^\sharp \ar[r]^{j} & G^\sharp  \ar[r]^{\hspace{-0.3cm} p} & (G/ \mathbb{Z})^\sharp}\, ),$$ is a $ws$-extension.
\end{proposition}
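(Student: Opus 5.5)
The plan is to copy the proof of Corollary~\ref{Q1} almost verbatim, replacing the Euclidean section by a Bohr-continuous one. The extra work is to check that all the Bohr topologies involved fit together as subspaces and quotients.

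\textbf{The section on $(\Q/\Z)^\sharp$.} The starting point is the extension $(\Z^\sharp \to \Q^\sharp \to (\Q/\Z)^\sharp)$. By the facts recalled above from \cite[Remark 10 (b), Theorem 24]{COMFORT2001215}, it is an extension in $\TA$ and it $ws$-splits. Fix a continuous pointed section ${\bf s}_{(\Q/\Z)}\colon (\Q/\Z)^\sharp\to\Q^\sharp$. We may assume $G$ is not finitely generated; otherwise $G/\Z$ is finite, hence discrete in the Hausdorff group $(G/\Z)^\sharp$, and any pointed section is continuous.

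\textbf{Subspace and quotient compatibility.} Next I would show that the sequence $\Z^\sharp\to G^\sharp\to (G/\Z)^\sharp$ is an extension in $\TA$, and that its groups sit correctly inside the rational ones.
\begin{itemize}
\item $G^\sharp$ is a topological subspace of $\Q^\sharp$, and $\Z^\sharp$ is a subspace of $G^\sharp$. The reason is that $\mathbb{T}$ is divisible, so every homomorphism $G\to\mathbb{T}$ extends to $\Q$. Hence the initial topology on $G$ defined by $\mathrm{Hom}_{\A}(G,\mathbb{T})$ coincides with the one induced from $\Q^\sharp$. In particular $j$ is a topological embedding.
\item For the same reason, $(G/\Z)^\sharp$ is a subspace of $(\Q/\Z)^\sharp$.
\item The quotient map $p\colon G^\sharp\to(G/\Z)^\sharp$ is open, with $(G/\Z)^\sharp\cong G^\sharp/\Z^\sharp$. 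Continuity is clear, since every character of $G/\Z$ composed with $p$ is a character of $G$. For openness I would invoke the same Comfort--Remus type result used for $\Q$: the quotient of a Bohr topology by a subgroup is the Bohr topology of the quotient. Alternatively, the quotient topology $G^\sharp/\Z$ is a totally bounded group topology on $G/\Z$ whose continuous characters are exactly all characters of $G/\Z$, so it equals the Bohr topology.
\end{itemize}

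\textbf{Restricting the section.} Define ${\bf s}_{(G/\Z)}={\bf s}_{(\Q/\Z)}|_{G/\Z}$. As in Corollary~\ref{Q1}, for $g\in G$ we have ${\bf s}_{(\Q/\Z)}(g+\Z)-g\in\Z$, so the value lies in $g+\Z\subseteq G$. Continuity into $G^\sharp$ then follows because $(G/\Z)^\sharp$ and $G^\sharp$ carry the subspace topologies of $(\Q/\Z)^\sharp$ and $\Q^\sharp$. Finally, ${\bf s}_{(G/\Z)}(0)=0$ and $p\,{\bf s}_{(G/\Z)}=\id$.

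\textbf{Main obstacle.} The hard part is the quotient identification $(G/\Z)^\sharp\cong G^\sharp/\Z^\sharp$, that is, the openness of $p$. For it I would rely on the cited Comfort--Remus type result, as the paper already does for $\Q$; the subspace statements are routine via divisibility of $\mathbb{T}$.
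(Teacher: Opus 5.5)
Your proposal is correct and follows the paper's own argument. You restrict a Bohr-continuous pointed section of $\Q^\sharp\to(\Q/\Z)^\sharp$ to $G/\Z$, reuse the well-definedness argument of Corollary~\ref{Q1}, and obtain continuity because the Bohr topology is inherited by subgroups. The openness of $p$ that you flag as the main obstacle is actually automatic once the restricted section $\mathbf{s}$ is continuous: for every $V\subseteq (G/\Z)^\sharp$ we have $V=\mathbf{s}^{-1}(p^{-1}(V))$, so $p$ is a quotient map with no appeal to Comfort--Remus.
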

\begin{proof}
It was established  in   \cite[Theorem 24]{COMFORT2001215} that the pointed section ${\bf s}_{\Q/\Z}$ is continuous with respect to the
(Bohr) $\sharp$-topology.
Since the $\sharp$-topology is inherited by subgroups, it follows that the pointed section ${\bf s}_{(G/\Z)}={\bf s}_{(\Q/\Z)\, |(G/\Z)}$ is also $\sharp$-continuous (note that it was proven in Corollary \ref{Q1} that ${\bf s}_{(G/\Z)}$ is well defined).
\end{proof}
\medskip

We now look at the subgroups of the additive group of real numbers. In order to do it, we need the following definition
(see \cite[Definition 15]{COMFORT2001215}, \cite[Definition 1.3]{Dikranjan2002}).
\medskip

\begin{definition}\label{ccs}
A group $H$ is a \emph{ccs-group} if for any group $G$ containing $H$ as a subgroup, there is a continuous
cross section $\Gamma\colon (G/H)^\sharp\to G^\sharp$ such that $\pi\circ\Gamma = id_{|G/H}$
(with $\pi\colon G^\sharp\to (G/H)^\sharp$ being the natural quotient homomorphism).
\end{definition}
\medskip

It is a known fact that every finitely generated group $H$ is a ccs-group. In particular, the
group of integers is a ccs-group (see \cite[Corollary 27]{COMFORT2001215}, \cite[Corollary 2.3(a)]{Dikranjan2002}).
From these facts, it follows the proposition below. However, we have included a sketch of its proof for the reader's sake.
\medskip

\begin{proposition}\label{R1}
Let $G$ be a subgroup of the additive group of real numbers $\R$ that contains the group of integers $\Z$. Then, the canonical exact sequence
$$E=(\xymatrix{\mathbb{Z}^\sharp \ar[r]^{j} & G^\sharp  \ar[r]^{p} & (G/ \mathbb{Z})^\sharp)}$$ is a $ws$-extension.
\end{proposition}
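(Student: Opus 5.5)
The plan is to invoke the ccs-property of $\Z$ directly. By the facts recalled just before the statement (\cite[Corollary 27]{COMFORT2001215}, \cite[Corollary 2.3(a)]{Dikranjan2002}), every finitely generated group, and in particular $\Z$, is a ccs-group in the sense of Definition~\ref{ccs}. Applying that definition to the ambient group $G \supseteq \Z$ gives a continuous cross section $\Gamma\colon (G/\Z)^\sharp \to G^\sharp$ with $\pi\circ\Gamma=\id_{G/\Z}$. Replacing $\Gamma$ by $\Gamma-\Gamma(0)$ makes it pointed. This works because $\Gamma(0)\in\Z$, translation in $G^\sharp$ is a homeomorphism, and $\pi(\Gamma(x)-\Gamma(0))=x$. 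So $\mathbf{s}=\Gamma-\Gamma(0)$ is a continuous pointed section in the sense of Definition~\ref{wses}.

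It remains to check that $E$ is an extension in $\TA$ in the sense of Definition~\ref{ws-ext}: $j$ and $p$ must be relatively open continuous homomorphisms with $j$ the kernel of $p$. Continuity of $j$ and $p$ holds because homomorphisms are always Bohr-continuous: any character of the target composes to a character of the source. Relative openness of $j$ needs that the Bohr topology of $G$ induces on $\Z$ its own Bohr topology. This follows from the classical fact that characters of a subgroup extend to the whole group, since $\mathbb{T}$ is divisible and hence injective. Openness of $p$ needs that the quotient topology $G^\sharp/\Z^\sharp$ coincides with $(G/\Z)^\sharp$. I would cite \cite[Remark 10 (b)]{COMFORT2001215}, exactly as done earlier for $\Q$, and note that the argument there is general.

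For a self-contained sketch instead of citing ccs, I would mimic Corollary~\ref{Q1}. If $G=\Z+\Z\theta_1+\dots$ contains irrational elements, the rational part $G\cap\Q$ is handled by Proposition~\ref{Q2}. For the remaining part I would choose a section valued in a fixed fundamental domain $[0,1)$ and check continuity via finitely many characters. This is where I expect the main obstacle: showing that the $[0,1)$-valued section is Bohr continuous needs the nontrivial combinatorics of Bohr neighbourhoods, which is precisely the content of \cite[Theorem 24]{COMFORT2001215}. So in the end I would rely on the ccs-result rather than reprove it, and present the remaining steps (pointedness, extension axioms) as routine.
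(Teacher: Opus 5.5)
Your proof is correct. It is the one-step deduction the paper points to just before the statement (``From these facts, it follows the proposition below''): since $\Z$ is a ccs-group, Definition~\ref{ccs} applied to $G\supseteq\Z$ gives the section. Your normalization $\Gamma-\Gamma(0)$ and your check of the extension axioms are sound.

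You can even drop the citation for openness of $p$. The map $p\colon G^\sharp\to(G/\Z)^\sharp$ is continuous and has the continuous right inverse $\Gamma$. So if $p^{-1}(U)$ is open, then $U=\Gamma^{-1}(p^{-1}(U))$ is open. Hence $p$ is a quotient map, and therefore open.

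The sketch the paper actually writes out takes a different route. It uses only the Bohr-continuous section $\mathbf{s}_{\Q/\Z}$ from \cite[Theorem 24]{COMFORT2001215}.
\begin{itemize}
\item[(i)] For $G=\R$, it splits $\R\cong\Q\oplus H$ algebraically with $\Z\subseteq\Q$, which is possible because $\Q$ is divisible. It then uses $(\Q\oplus H)^\sharp\cong\Q^\sharp\oplus H^\sharp$ and takes the section $(x+\Z,h)\mapsto(\mathbf{s}_{\Q/\Z}(x+\Z),h)$.
\item[(ii)] For arbitrary $G$, it restricts this section to $G/\Z$. The restriction takes values in $G$ because $\Z\subseteq G$, and it is continuous because Bohr topologies are inherited by subgroups.
\end{itemize}

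Your route is shorter but rests on the general ccs theorem for finitely generated groups. The paper's route needs only the $\Q/\Z$ case plus two permanence properties of the Bohr topology: finite direct sums and passage to subgroups.

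The paper's route also shows that the obstacle you anticipated in your fallback sketch is avoidable. No $[0,1)$-valued section is needed. Because it works inside $\R$, where $\Q$ is a direct summand, and then restricts, it never needs $G$ itself to split over $G\cap\Q$. In general $G$ does not split this way. For example, take $G=\langle 1,\sqrt2,(1+\sqrt2)/p^n,(1-\sqrt2)/q^n : n\geq 1\rangle$ with distinct primes $p,q$. Then $G\cap\Q=\Z$, but any retraction $\phi\colon G\to\Z$ would need both $\phi(\sqrt2)=-1$ and $\phi(\sqrt2)=1$. So $\Z$ is not a direct summand of $G$.
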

\begin{proof}
We proceed by a case study on the group $G$.\\
(i) We first assume that $G=\R$. Since $\R\cong  \Q^{(\frak c)}$, we may assume that $\R\cong  \Q\oplus \Q^{(\frak c)}$ with $\Z\subseteq \Q$.
Set $H=\Q^{(\frak c)}$. Then, we have $\R\cong \Q\oplus H$. Therefore,
we have the exact sequence
$$E={\xymatrix{\mathbb{Z}^\sharp \ar[r]^{\hspace{-0.5cm} j} & (\Q\bigoplus H)^\sharp  \ar[r]^{\hspace{-0.5cm} p} & ((\Q/\Z) \bigoplus H)^\sharp})}.$$
Again, using that $(A\bigoplus B)^\sharp\cong A^\sharp\bigoplus B^\sharp$ for every pair of Abelian groups $\{A,B\}$,
the above sequence can be set as
$$E'=(\xymatrix{\mathbb{Z}^\sharp\ar[r]^{\hspace{-0.5cm} j} & \Q^\sharp\bigoplus H^\sharp \ar[r]^{\hspace{-0.5cm} p} & (\Q/\Z)^\sharp \bigoplus H^\sharp}\,).$$
We define the pointed section ${\bf s}\colon (\Q/\Z)^\sharp\bigoplus H^\sharp\to \Q^\sharp\bigoplus H^\sharp$
as $${\bf s}(x+\Z,h)~=~({\bf s}_{(\Q/\Z)}(x+\Z),h).$$
Since each component of ${\bf s}$ is $\sharp$-continuous, it follows that ${\bf s}$ is also $\sharp$-continuous.\\
(ii) For the general case, we have a group $G$ such that $\Z\leq G\lneqq \R$. As in Proposition \ref{Q2}, we define
${\bf s}_{(G/\Z)}\colon G/\Z\to G$ as ${\bf s}_{(G/\Z)}={\bf s}_{(\R/\Z)\, |(G/\Z)}$. {It is proven in a similar way 
to how it was done in Corollary \ref{Q1} that  ${\bf s}_{(G/\Z)}$ is well defined} and, as a consequence, 
$\sharp$-continuous since the $\sharp$-topology is inherited by subgroups.
\end{proof}

\begin{remark} Note that  we have seen in Remark \ref{nows} that $(\mathbb{Z} \to \mathbb{R} \to \R/\Z)$ is not a $ws$-extension.
\end{remark}

As a consequence of Propositions \ref{prducts} and \ref{R1}, we obtain some new examples of exact sequences that $ws$-split.

\begin{corollary} Let $G$ be a subgroup of $\R$ that contains $\Z$, $I$ an infinite index set, and $J\subseteq I$. The following
extensions $ws$-split.
\begin{enumerate}%[(a)]
  \item[(a)]\ \ $E_1=(\xymatrix{\mathbb{Z}^J\ar[r]^{j}& G^I\ar[r]^{\hspace{-1cm} p}& (G/ \mathbb{Z})^J\bigoplus G^{I\setminus J}}).$
  \item[(b)]\ \ $E_2=(\xymatrix{(\mathbb{Z}^n)^\sharp\ar[r]^{j} &(\R^n)^\sharp\ar[r]^{\hspace{-0.4cm} p}\, & (\R^n/ \mathbb{Z}^n)^\sharp}).$
  \item[(c)]\ \ $E_3=(\xymatrix{(\mathbb{Z}^{J})^\sharp \ar[r]^{j} & (G^I)^\sharp\ar[r]^{\hspace{-1cm} p} & (G^J/ \mathbb{Z}^J)^\sharp\bigoplus
  (G^{I\setminus J})^\sharp}),$ where $J$ is finite.
  \item[(d)]\ \ $E=(\xymatrix{(\mathbb{Z}^J)^\sharp \ar[r]^{j} & (\R^I)^\sharp\ar[r]^{\hspace{-1cm} p} & (\R^J/ \mathbb{Z}^J)^\sharp\bigoplus (\R^{I\setminus J})^\sharp}),$ where $J$ is finite.
\end{enumerate}
\end{corollary}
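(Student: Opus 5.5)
The plan is to treat each item as a product of $ws$-extensions and then apply Proposition~\ref{prducts}, using Propositions~\ref{Q2} and \ref{R1} for the nontrivial factors. The trivial factors are handled by the extension $0 \to G \to G$, or $0\to G^\sharp\to G^\sharp$. This extension is split with the identity as section, so it is trivially a $ws$-extension.

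For (a), I would write $E_1$ as the product over $i\in I$ of the extensions $E^{(i)}$. For $i\in J$, take $E^{(i)}=(\Z\to G\to G/\Z)$, where the topologies are those for which Corollary~\ref{Q1} applies when $G\le\Q$. When $G\le \R$, I would use the covering-extension argument of Proposition~\ref{csplit}: the section ${\bf s}_{(\R/\Z)}$ given by the inverse of $p$ on $U=\R/\Z\setminus\{y\}$ restricts to a continuous section on $G/\Z$ provided $G/\Z\subset U$. If no such $y$ exists, I would fall back to reading (a) in the $\sharp$ setting and invoke Proposition~\ref{R1}. For $i\in I\setminus J$, take $E^{(i)}=(0\to G\to G)$. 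Proposition~\ref{prducts} then gives a $ws$-extension
\[
\prod_J \Z \to G^I \to \prod_J G/\Z \times \prod_{I\setminus J} G ,
\]
which I identify with the displayed sequence, reading $\bigoplus$ as the topological product of the two factors.

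For (b), (c) and (d), finiteness of $J$ (respectively $n$) is essential. It lets me use $(A\oplus B)^\sharp\cong A^\sharp\oplus B^\sharp$, as in the proof of Proposition~\ref{R1}, to rewrite $(\R^n)^\sharp\cong(\R^\sharp)^n$, $(\Z^n)^\sharp\cong(\Z^\sharp)^n$ and $(\R^n/\Z^n)^\sharp\cong((\R/\Z)^\sharp)^n$. Then $E_2$ is the finite product of $n$ copies of the $ws$-extension of Proposition~\ref{R1} with $G=\R$, and Proposition~\ref{prducts} applies. For (c) I would similarly decompose
\[
(G^I)^\sharp \cong (G^J)^\sharp\oplus(G^{I\setminus J})^\sharp\cong (G^\sharp)^J\oplus (G^{I\setminus J})^\sharp,
\]
and take the product of $|J|$ copies of Proposition~\ref{R1} with the trivial extension $0\to(G^{I\setminus J})^\sharp\to(G^{I\setminus J})^\sharp$. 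Item (d) is the special case $G=\R$ of (c).

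The main obstacle is the compatibility of the Bohr functor with products, which holds only for finite direct sums. The Bohr topology of an infinite product is not the product of Bohr topologies, so the splitting into a finite part $J$ and a remaining summand must occur at the level of a two-term direct sum before applying $\sharp$. I would verify that $(A\oplus B)^\sharp\cong A^\sharp\times B^\sharp$ using that characters of $A\oplus B$ are exactly sums of characters of the two summands. A second point to check is that the kernel and quotient maps in the product sequences agree with $j$ and $p$ under these identifications, so that the resulting section is a section of the stated extension.
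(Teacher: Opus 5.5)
Your treatment of (b)--(d) is the paper's argument. Since $(A\oplus B)^\sharp\cong A^\sharp\times B^\sharp$ for finitely many summands, each sequence is a finite product of copies of the extension in Proposition~\ref{R1} together with a trivial extension, and Proposition~\ref{prducts} finishes.

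The gap is in (a), because the statement you prove changes with $G$. You first read (a) with Euclidean topologies. There your covering argument is correct for $G\subsetneq\R$: any $y\in\R/\Z\setminus G/\Z$ works, which extends Corollary~\ref{Q1}. But for $G=\R$ and $J\neq\emptyset$ the Euclidean version of (a) is false, not merely out of reach of this method. Suppose $\mathbf{s}$ were a continuous pointed section and fix $j_0\in J$. Compose the inclusion of $\R/\Z$ as the $j_0$-th factor, then $\mathbf{s}$, then the $j_0$-th projection $\R^I\to\R$. This gives a continuous pointed section of $\R\to\R/\Z$, contradicting Remark~\ref{nows}. Falling back to the $\sharp$ reading in that case replaces the theorem rather than completing its proof.

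The paper derives (a) from Propositions~\ref{prducts} and~\ref{R1}, and the reading this produces is also the one under which (a) holds for every $\Z\le G\le\R$. In it, each factor carries its Bohr topology: $\Z^\sharp\to G^\sharp\to(G/\Z)^\sharp$ for $i\in J$, and $0\to G^\sharp\to G^\sharp$ for $i\in I\setminus J$. The groups $\Z^J$, $G^I$ and $(G/\Z)^J\oplus G^{I\setminus J}$ carry the product topology. This is what separates (a), valid for arbitrary $J$, from (c), where the Bohr topology is taken on the product group itself and finiteness of $J$ is used.

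Under this reading your fallback is the entire proof, uniformly in $G$. The Euclidean covering argument contributes nothing here: a section continuous for the Euclidean topologies is not thereby continuous from $(G/\Z)^\sharp$ to $G^\sharp$. That $\sharp$-continuity is exactly the nontrivial input behind Propositions~\ref{Q2} and~\ref{R1}. So fix this one interpretation, drop the case distinction, and apply Proposition~\ref{R1} for every $i\in J$.
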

\bigskip

We have seen above that the product of $ws$-extensions define a new $ws$-extension. From this fact, it follows that
the same holds true for direct sums of $ws$-extensions when they are equipped with the product topology.
However, there is another canonical topology that can be defined on a direct sum, namely, the coproduct topology.
We do not know whether the direct sums of $ws$-extensions, equipped with the coproduct topology, define a new $ws$-extension.
Therefore, the following question remains~open:
\medskip

\begin{question}\label{q1} Let  $$\{ E_i=(\xymatrix{B_i \ar[r]^{j_i} & X_i  \ar[r]^{p_i} & A_i})\}_{i\in I}$$ be a family of  $ws$-extensions in {\bf TA}. Is the sequence
$$E=(\xymatrix{\bigoplus\limits_{i\in I} B_i \ar[r]^{j} & \bigoplus\limits_{i\in I}X_i   \ar[r]^{p} & \bigoplus\limits_{i\in I}A_i} ),$$ where $j((b_i)):=(j_i(b_i))$, $p((x_i)):=(p_i(x_i))$ and all direct sums are equipped with the coproduct topology,
a $ws$-extension?
\end{question}

{Although the Bohr topology of a direct sum of Abelian groups and  does not coincide with the coproduct topology in general,
it provides us with the following clarifying example:}
\medskip

\begin{example}
The extension in $\A$:  $$\xymatrix{\mathbb{Z}/ 2\mathbb{Z} \ar[r]^{\alpha} &\mathbb{Z}/ 4 \mathbb{Z} \ar[r]^{\beta} &\mathbb{Z}/ 2\mathbb{Z}},$$
  where $\alpha(1+ 2\mathbb{Z})=2+4 \mathbb{Z} $, $\beta (1+4 \mathbb{Z} )= 1+2 \mathbb{Z}$
   has a pointed section ${\bf{s}}(1+2 \mathbb{Z} )= 1+4 \mathbb{Z}$, and ${\bf{s}}(0+4 \mathbb{Z} )= 0+2 \mathbb{Z}$.
 This implies that the induced extension
 $$\xymatrix{\oplus_{\mathbb{N}}\mathbb{Z}/ 2\mathbb{Z} \ar[r]^{ \oplus_{\mathbb{N}}\alpha} & \oplus_{\mathbb{N}}\mathbb{Z}/  \mathbb{Z} \ar[r]^{ \oplus_{\mathbb{N}}\beta} & \oplus_{\mathbb{N}}\mathbb{Z}/ 2\mathbb{Z}  }$$ also has  a pointed section.
Note that in this case, since $\mathbb{Z}/ 2\mathbb{Z}, \mathbb{Z}/ 4 \mathbb{Z} $ are discrete groups, the coproduct topology agrees with the discrete topology.
Now, if  we take the Bohr topologies, we have the following  extension  in $\TA$:
$$E=(\xymatrix{ (\oplus_{\mathbb{N}}\mathbb{Z}/ 2\mathbb{Z})^\sharp \ar[r]^{ (\oplus_{\mathbb{N}}\alpha)^\sharp} & (\oplus_{\mathbb{N}}\mathbb{Z}/ 4 \mathbb{Z})^\sharp \ar[r]^{( \oplus_{\mathbb{N}}\beta)^\sharp} & (\oplus_{\mathbb{N}}\mathbb{Z}/ 2\mathbb{Z} )^\sharp}),$$
which is not a $ws$-extension (see \cite{COMFORT2001215}).
This fact is relevant because, first, all groups in the extension
$E$ are $0$-dimensional (see Remark \ref{nows}) and, second, it provides an example of two topological  groups $A^\sharp$ and $B^\sharp$ where the natural transformation
$E_{{\A}^\sharp}^{ws}(A^\sharp,B^\sharp) \to E_{\A}(A,B)$ is not surjective, where ${\A}^\sharp$ denotes the full subcategory of $\TA$ whose objects $C^\sharp$  are Abelian groups $C$  provided with the Bohr topology,  and $E_{{\A}^\sharp}^{ws}(A,B)$ is obtained  by taking the  category of   $ws$-extensions of $B^\sharp$ by $A^\sharp$ in ${\A}^\sharp$  modulo topological~isomorphisms.
\end{example}

In light of this example, it would be desirable to know the answer to the following: \medskip

\begin{question}\label{q2} Let  $$\{ E_i=(\xymatrix{B_i \ar[r]^{j_i} & X_i  \ar[r]^{p_i} & A_i})\}_{i\in I},$$ be a family of  extensions in
{\bf A}. When is the sequence
$$E=(\xymatrix{(\bigoplus\limits_{i\in I} B_i)^\sharp \ar[r]^{j} & (\bigoplus\limits_{i\in I}X_i )^\sharp  \ar[r]^{p} &
(\bigoplus\limits_{i\in I}A_i)^\sharp}),$$ where $j((b_i)):=(j_i(b_i))$ and $p((x_i)):=(p_i(x_i))$,
a $ws$-extension?
\end{question}

\section{Remarks, conclusions, and future work}

For two topological  Abelian groups $A, B \in  \TA$ we have analyzed the structure of the Abelian group $E_{\TA}^{ws}(A,B)$
of $ws$-extensions of $B$ by $A$ modulo topological isomorphism of $ws$-extensions, and we have given two different descriptions of this Abelian group:
$$\Sigma_{A,B}/\sim,$$ using all different structures of the topological Abelian group on the product space $B \times A$, and
$$\mathcal{Z}_c(A,B)/\mathcal{B}_c(A,B),$$
using  cocycles  of the form $ f \colon A \times A \to B$ where $f$ is a suitable continuous pointed map.

{We have also provided  a six-term exact sequence to study the canonical map
$E_{\TA}^{ws}(A,B) \to E_{\A}(A,B)$.}

We remark that for two Abelian groups $A, B \in \A$,  $E(A,B)$ can also be obtained as a derived functor using injective or projective resolutions; see \cite{Eilenberg42,Fuchs2015,Hilton-Stammbach}. If $A, B$ are  locally compact topological Abelian groups,
%($A, B \in \mathcal{L}$ )
$E_{\TA}(A,B)$  can also be obtained as a derived functor with the homological algebra studied by~Moskowitz and Hoffman for the subcategory  locally compact topological Abelian groups; see~\cite{BELLO20161607, HOFFMANN2007504, f93a063e-b6b5-3414-a1b5-8af16db46d1a}.

One of the pending objectives is  to obtain $E_{\TA}^{ws}(A,B)$ as a derived functor by taking suitable projective resolutions that can be  constructed using free topological Abelian groups over pointed topological spaces. Note that the category of topological Abelian groups is isomorphic to the category of topological modules over the topological discrete ring $\mathbb{Z}$.
Nummela announced in the paper   \cite{Nummela73} a study of Ext functors for the category of modules over a topological ring using projective resolutions. However, this study appears to have not been published in the end.

We also remark that there are some interesting open problems related to Bohr topologies and $ws$-extensions, as  seen in open Questions  \ref{q1} and \ref{q2}.

%\section*{Use of AI tools declaration}
%The authors declare they have not used Artificial Intelligence (AI) tools in the creation of this article.

\section*{Acknowledgement}

Mar\'{\i}a V. Ferrer and  Salvador Hern\'andez-Mu\~noz acknowledge partial support by the Universitat Jaume I, grant UJI-B2022-39.  Luis J. Hern\'andez acknowledges support from project REGI25/61 of the University of La Rioja. We thank Professor Javier Trigos-Arrieta for several comments and remarks that helped us to improve the manuscript.

%\section*{Conflict of interest}
%
%The authors declare there is no conflict of interest.

\end{document}